\newtheorem{lemma}{Lemma}[section]
\newtheorem{prop}[lemma]{Proposition}
\newtheorem{claim*}{Claim}
\newtheorem{thm}[lemma]{Theorem}
\theoremstyle{definition}
\newtheorem{rmk}[lemma]{Remark}
\newtheorem{example}[lemma]{Example}
\numberwithin{equation}{section}
\numberwithin{table}{section}
\newcommand{\defi}[1]{\textsf{#1}} 
\DeclareMathOperator{\known}{known}
\DeclareMathOperator{\tors}{tors}
\newcommand{\Q}{\mathbb{Q}}
\newcommand{\Z}{\mathbb{Z}}
\newcommand{\F}{\mathbb{F}}
\newcommand{\Qp}{\mathbb{Q}_{p}}
\newcommand{\Zp}{\mathbb{Z}_{p}}
\newcommand{\Fp}{\mathbb{F}_{p}}
\newcommand{\Cp}{\mathbb{C}_{p}}
\newcommand{\ord}{\text{ord}_p}
\newcommand{\ol}[1]{\overline{#1}}
\newcommand{\OmegaJ}{H^0(J_{\Qp}, \Omega^1)}
\newcommand{\OmegaC}{H^0(C_{\Qp}, \Omega^1)}
\newcommand{\Ann}{\operatorname{Ann}}
\newcommand{\rk}{\operatorname{rank}}
\renewcommand{\L}{\mathcal{L}}
\renewcommand{\div}{\operatorname{div}}
\title{Chabauty--Coleman experiments for genus 3 hyperelliptic curves}
\author{Jennifer S. Balakrishnan}
\author{Francesca Bianchi}
\author{Victoria Cantoral-Farf\'an} 
\author{Mirela \c{C}iperiani}
\author{Anastassia Etropolski}
\address{Jennifer S. Balakrishnan, Department of Mathematics and Statistics, Boston University, 111 Cummington Mall, Boston, MA 02215, USA}
\email{jbala@bu.edu}
\address{Francesca Bianchi, Mathematical Institute, University of Oxford, Andrew Wiles Building, Radcliffe Observatory Quarter, Woodstock Road, Oxford OX2 6GG, UK}
\email{francesca.bianchi@maths.ox.ac.uk }
\address{Victoria Cantoral-Farf\'an, ICTP - 11 Strada Costeira, Trieste, Italy}
\email{vcantora@ictp.it}
\address{Mirela \c{C}iperiani, Department of Mathematics, The University of Texas at Austin, 1 University Station, C1200 
Austin, Texas 78712, USA}
\email{mirela@math.utexas.edu}
\address{Anastassia Etropolski, Department of Mathematics, Rice University MS 136, Houston, TX 77251, USA }
\email{aetropolski@rice.edu}
\begin{document}

\date{\today}
\maketitle

\begin{abstract}
We describe a computation of rational points on genus $3$ hyperelliptic curves $C$ defined over $\Q$ whose Jacobians have Mordell--Weil rank $1$. Using the method of Chabauty and Coleman, we present and implement an algorithm in \texttt{Sage} to compute the zero locus of two Coleman integrals and analyze the finite set of points cut out by the vanishing of these integrals. We run the algorithm on approximately 17,000 curves from a forthcoming database of genus 3 hyperelliptic curves and discuss some interesting examples where the zero set includes global points not found in $C(\Q)$. 
\end{abstract}

\section{Introduction}
Let $C$ be a non-singular curve over $\Q$ (or more generally, a number field $K$) of genus $g$. In the case where $g = 0$ or $g = 1$, $C$ has extra structure given by the fact that if $C(\Q)$ is non-empty, then $C$ is rational (if $g = 0$) or $C$ is an elliptic curve (if $g = 1$). In these cases, computing the set of rational points is either trivial by the Hasse Principle, or highly non-trivial in the case of elliptic curves. In the latter case the rational points form a finitely generated abelian group, and methods specific to this case exist for computing  upper bounds on the rank of $C(\Q)$, and the possibilities for the torsion subgroup of $C(\Q)$ are completely understood by the work of Mazur \cite[Theorem 8]{Mazur}.

On the other hand, if $g \ge 2$, then $C$ is of general type, and the Mordell conjecture, proved by Faltings in 1983 \cite{faltings}, implies that $C(\Q)$ is finite. Our main motivation is to compute $C(\Q)$ explicitly in this case. We will focus our attention on hyperelliptic curves of genus 3 such that the group of rational points of the Jacobian of $C$ has Mordell--Weil rank $r = 1$. This falls into the special case where $r < g$ which was considered by Chabauty in 1941 \cite{chabauty}, and techniques developed by Coleman in the 1980s allow us to use $p$-adic integration to bound, and often, in practice, explicitly compute, the set of rational points \cite{coleman,coleman_eff}.

In addition to these methods, we will also use the algorithm of Balakrishnan, Bradshaw, and Kedlaya \cite{explicitcoleman} and its implementation in \texttt{Sage} \cite{sage} to explicitly compute the relevant Coleman integrals by computing analytic continuation of Frobenius on curves. Nonetheless, we note that the algorithms presented in this article (see Section \ref{algorithm}) have not been implemented previously by other authors or carried out on a large collection of curves. (See, however, \cite{bruin2008} for related work in genus 2.)  Our code is available at \cite{code}.

We consider the case of genus $3$ hyperelliptic curves for two reasons:
\begin{enumerate}
	\item When $g = 3$, we can impose the condition that $0 < r < g -1$, i.e.\ $r = 1$, which, by a dimension argument, makes the method more effective. Indeed, in this case, the set $C(\Q)$ is contained in the intersection of the zero sets of the integrals of two linearly independent regular $1$-forms on the base-change of $C$ to $\Q_p$, where $p$ is any odd prime of good reduction.
	\item When $g = 2$, the Jacobian of $C$ is a surface, and its geometry and arithmetic is better understood. In particular, methods developed by Cassels and Flynn have been implemented by Stoll in \verb+Magma+ to make the computations needed much more efficient. More precisely, in this case, one can simplify the algorithm further by working with the quotient of the Jacobian by $\braket{\pm 1}$, which is a quartic surface in $\mathbb{P}^3$, known as the Kummer surface. In order to make the search of rational points more effective, the Chabauty method can also be combined with the Mordell--Weil sieve, which uses information at different primes (see also \cite{BruinStoll}).
\end{enumerate}

We begin with an overview of the Chabauty--Coleman method and explicit Coleman integration in Section \ref{preliminaries}.  In Section \ref{algorithm}, we present an algorithm to find a finite set of $p$-adic points containing the rational points of a hyperelliptic curve $C/\Q$ of genus $3$, which admits an odd model, and whose Jacobian $J$ has rank $1$. We fix a prime $p$ and work under the assumption that we know a $\Q$-rational point whose image in the Jacobian has infinite order (here the embedding of $C$ into $J$ is via the base-point $\infty$).  Besides $\Q$-rational points, the output will include all points in $C(\Q_p)$ which are in the pre-image of the $p$-adic closure of $J(\Q)$ in $J(\Q_p)$.

We then proceed to run our code on a list of relevant curves taken from the forthcoming database of genus 3 hyperelliptic curves \cite{g3hyp}. Our list consists of 16,977 curves, and we separately do a point search in \verb+Magma+ to find all $\Q$-rational points whose $x$-coordinates with respect to a fixed integral affine model have naive height at most $10^5$ (cf. Section \ref{dataanalysis}). Our Chabauty--Coleman computations then show that there are no $\Q$-rational points of larger height on any of these curves. 

In some cases, our algorithm outputs points in $C(\Q_p)\setminus C(\Q)$. Besides $\Q_p$-rational (but non-$\Q$-rational) Weierstrass points, on $75$ curves we find that the local point is the localization of a point  $P\in C(K)$ where $K$ is a quadratic field in which the prime $p$ splits. In all these cases, we are able to explain why these points appear in the zero locus that we are studying. The following three scenarios occur, and we discuss representative examples of each in Section \ref{dataanalysis}:

\begin{itemize}
\item It may happen that $[P- \infty]$ is a torsion point in the Jacobian (see Example \ref{ex:1}). In this case, the integral of any $1$-form would vanish between $\infty$ and $P$. 

\item As in Example \ref{ex:2}, it may happen that some multiple of the image of $[P-\infty]$ in the Jacobian actually belongs to $J(\Q)$: the vanishing here follows by linearity in the endpoints of integration. 

\item The Jacobian $J$ may decompose over $\Q$ as a product of an elliptic curve and an abelian surface. Then if the subgroup $H$ generated by $J(\Q)$ and the point $[P-\infty]$ comes from the elliptic curve, the dimension of the $p$-adic closure of $H$ in $J(\Q_p)$ must be equal to $1$, even if $[P-\infty]$ is a point of infinite order (see Example \ref{ex:3}).
\end{itemize}

\subsection*{Acknowledgements}
The first author is supported in part by NSF grant DMS-1702196, the Clare Boothe Luce Professorship (Henry Luce Foundation), and Simons Foundation grant \#550023. The second author is supported by EPSRC and by Balliol College through a Balliol Dervorguilla scholarship. The third author was supported by a Conacyt fellowship. The fourth author is supported by NSF grant DMS-1352598.\\ 
This project began at ``WIN4: Women in Numbers 4,'' and we are grateful to the conference organizers for facilitating this collaboration. We further acknowledge the hospitality and support provided by the Banff International Research Station.  We thank the Simons Collaboration on Arithmetic Geometry, Number Theory, and Computation for providing computational resources, and we are grateful to Bjorn Poonen, Andrew Sutherland, and Raymond van Bommel for helpful conversations.

\section{The Chabauty--Coleman method and Coleman integration}\label{preliminaries}
In this section, we review the Chabauty--Coleman method, used to compute rational points in our main algorithm. For further details, see Section \ref{algorithm}. We also give a brief overview of explicit Coleman integration on hyperelliptic curves.

\subsection{Chabauty--Coleman method}\label{sec:CCmethod}

Let $C$ be a smooth, projective curve over the rationals of genus at least 2. By the work of Faltings \cite{faltings}, we know $C(\Q)$ to be finite, but Faltings' proof does not explicitly yield the set $C(\Q)$. 
However, before the work of Faltings, Chabauty considered the following set-up. Let $p$ be a prime and $P\in C(\Q_p)$. Consider the embedding 
\begin{align*}
\iota_P \colon & C \hookrightarrow J \\
& Q \mapsto [Q - P].
\end{align*}
Then let $\overline{J(\Q)}$ denote the $p$-adic closure of $J(\Q)$ and define
\[
C(\Q_p) \cap \overline{J(\Q)} := \iota_P (C(\Q_p)) \cap \overline{J(\Q)}. 
\]
Chabauty proved the following case of Mordell's conjecture:

\begin{thm}[\cite{chabauty}]\label{th:chabauty}
Let $C/\Q$ be a curve of genus $g\geq 2$ such that the Mordell--Weil rank of the Jacobian $J$ of $C$ over $\Q$ is less than $g$, and let $p$ be a prime. Then $C(\Q_p) \cap \overline{J(\Q)}$ is finite.
\end{thm}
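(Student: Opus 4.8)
The plan is to construct a single nonzero regular differential whose $p$-adic integral vanishes on the entire intersection, and then to bound the zeros of this integral disk by disk. First I would invoke the Mordell--Weil theorem: $J(\Q)$ is finitely generated, say of rank $r$, and by hypothesis $r \le g-1$. The group $J(\Qp)$ is a compact $p$-adic Lie group of dimension $g$, and the formal logarithm furnishes a homomorphism $\log\colon J(\Qp) \to \mathrm{Lie}(J)(\Qp) \cong \Qp^g$ that is a local analytic isomorphism near the identity with finite (torsion) kernel. Since $\overline{J(\Q)}$ is the $p$-adic closure of a group of rank $r$, its image $\log(\overline{J(\Q)})$ spans a $\Qp$-subspace $W \subseteq \Qp^g$ with $\dim_{\Qp} W \le r$.

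Next I would pass to the cotangent space. Using the canonical identification $\OmegaJ \cong \OmegaC$ together with the duality between $\OmegaJ$ and $\mathrm{Lie}(J)$, the annihilator $\Ann(W) \subseteq \OmegaJ$ has dimension at least $g - r \ge 1$, so I may fix a nonzero differential $\omega \in \Ann(W)$. For $Q \in C(\Qp)$ I then define $f(Q) = \int_{P}^{Q}\omega$, the $p$-adic abelian integral, which on $J$ computes the pairing $\langle \omega, \log([Q-P])\rangle$. Consequently, if $Q$ lies in $\iota_P(C(\Qp)) \cap \overline{J(\Q)}$, then $\log([Q-P]) \in W$, whence $f(Q) = 0$; the intersection is therefore contained in the zero locus of $f$.

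Finally I would analyze $f$ locally. Since $C(\Qp)$ is compact, it is covered by finitely many residue disks, and on each disk $f$ is given by a convergent power series in a local parameter $t$, obtained by formally integrating the expansion of $\omega$. Because $\omega$ is a nonzero regular differential it cannot vanish identically on any disk, so $f$ restricts to a nonconstant convergent series there; by Strassmann's theorem (the $p$-adic analogue of Weierstrass preparation) such a series has only finitely many zeros in the disk. Summing over the finitely many disks shows that $f$ has finitely many zeros on $C(\Qp)$, and hence that $C(\Qp) \cap \overline{J(\Q)}$ is finite.

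I expect the genuine difficulty to lie not in the dimension count but in the analytic input of the second and third paragraphs: constructing the integral $f$ as a well-defined locally analytic function, verifying its compatibility with the logarithm so that annihilating forms pair to zero against $\overline{J(\Q)}$, and securing the local convergence needed for Strassmann's theorem. These are precisely the points that Coleman's theory of $p$-adic integration later renders fully explicit and effective; for Chabauty's qualitative statement it suffices to know that $f$ is a nonconstant $p$-adic analytic function on each residue disk.
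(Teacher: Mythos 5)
Your argument is correct and is essentially the approach the paper itself takes: the paper cites Chabauty for this theorem without reproving it, but the machinery it sets up in Section \ref{sec:CCmethod} (the annihilator $\Ann(J(\Q))$ of dimension $g-r\geq 1$, the vanishing of $\lambda_{\omega_J}$ on $\overline{J(\Q)}$, and the finiteness of zeros of the resulting power series on each residue disk via Lemma \ref{UpperBound}) is exactly your construction. The only caveat is that you are giving the Coleman-style reinterpretation rather than Chabauty's original $p$-adic Lie group argument, which is precisely the perspective the paper adopts.
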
 

Chabauty's result was later re-interpreted and made effective by Coleman, who showed the following:

\begin{thm}[\cite{coleman_eff}]\label{th:coleman}
	Let $C$ be as above and suppose that $p$ is a prime of good reduction for $C$. If $p > 2g$, then
	$$\#C(\Q) \le \#C(\F_p) + 2g - 2.$$
\end{thm}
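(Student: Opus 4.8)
The plan is to make Chabauty's finiteness statement effective by controlling the $p$-adic analytic functions whose zero locus contains $C(\Q)$. The starting point is Coleman's theory of $p$-adic integration of $1$-forms. Since the Mordell--Weil rank $r$ of $J$ is less than $g$, the image of $J(\Q)$ inside the tangent space $H^0(J_{\Qp}, \Omega^1) \cong \OmegaC$ spans a subspace of dimension at most $r < g$, so its annihilator under the integration pairing contains a nonzero regular $1$-form $\omega$. Concretely, $\omega$ is a global differential on $C_{\Qp}$ with the property that $\int_P^Q \omega = 0$ whenever the class $[Q-P]$ lies in $J(\Q)$, and hence (by continuity of the Coleman integral) whenever it lies in $\overline{J(\Q)}$. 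Fixing a base point, this produces a locally analytic function $f(Q) = \int_{\infty}^{Q}\omega$ on $C(\Qp)$ that vanishes on all of $C(\Q)$.

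First I would pass to the residue discs. Because $p$ is a prime of good reduction, reduction modulo $p$ gives a surjection $C(\Qp) \to C(\Fp)$, and each fiber (residue disc) is parametrized by a local coordinate $t$ in which $\omega$ has a power-series expansion $\omega = (a_0 + a_1 t + \cdots)\, dt$ with $p$-integral coefficients, suitably normalized. Integrating formally, $f$ restricted to a residue disc is a power series $\int \omega$ in $t$, and the rational points of $C$ lying in that disc are among its zeros. The core of Coleman's argument is then a bound, disc by disc, on the number of zeros of this power series using the theory of Newton polygons for $p$-adic power series. The key input is that the reduction of $\omega$ is a nonzero differential on the reduced curve $C_{\Fp}$; one controls the valuations of the coefficients $a_i$ so that $f$ has at most $1 + (\text{number of zeros of }\ol{\omega}\text{ in the disc})$ zeros, with an extra care needed at the discs where the reduction of $\omega$ vanishes.

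The heart of the count is a global constraint on the reduced differential. The differential $\ol\omega$ is a nonzero element of $H^0(C_{\Fp}, \Omega^1)$, so by Riemann--Roch its associated divisor has degree $2g-2$; this is what feeds the surplus term $2g-2$ in the stated bound. Summing the local contributions across all residue discs, one obtains
\[
\#C(\Q) \le \sum_{\ol Q \in C(\Fp)} \bigl(1 + n_{\ol Q}\bigr) = \#C(\Fp) + \sum_{\ol Q} n_{\ol Q},
\]
where $n_{\ol Q}$ records the order of vanishing of $\ol\omega$ at $\ol Q$ (plus lower-order corrections from the discs with supersingular or ramification behaviour), and the total of the $n_{\ol Q}$ is bounded by $\deg(\div \ol\omega) = 2g-2$. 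The hypothesis $p > 2g$ is precisely what guarantees that the $p$-adic valuation of the factorials appearing in the integrated series $\int \omega$ does not exceed the $t$-adic order of vanishing, so that the naive Newton-polygon zero count is not spoiled; equivalently, it ensures $t^{i}/i$ has nonnegative valuation for the relevant range of $i$.

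The main obstacle I anticipate is the careful local analysis at the bad residue discs --- those where $\ol\omega$ vanishes to positive order, so that the leading term of the integrated power series is pushed to higher degree and the crude ``one zero per disc'' count must be replaced by a sharper Newton-polygon estimate. Making sure that the local excess in these discs is controlled by the local vanishing order of $\ol\omega$, uniformly, and that the condition $p>2g$ prevents the denominators $1/i$ in $\int t^{i-1}\,dt = t^i/i$ from creating spurious zeros, is the technical crux; once these local counts are in hand, summing them and invoking $\deg(\div\ol\omega)=2g-2$ assembles the global bound.
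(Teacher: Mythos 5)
The paper does not prove this theorem---it is quoted from Coleman's \emph{Effective Chabauty}---but your sketch is the standard argument from that source, and it is the same local machinery the paper itself assembles in Section~\ref{sec:upper_bounds}: Lemma~\ref{UpperBound} (at most $m_f+1$ zeros per residue disc when $m_f<p-2$) together with Remark~\ref{remarkRiemannRoch} (Riemann--Roch gives $\sum_{\ol Q} m_{\ol Q}\le\deg(\div\ol\omega)=2g-2$, so $p>2g$ suffices), summed over $C(\F_p)$. Your outline is correct; the only small imprecisions are the aside about ``supersingular or ramification behaviour'' (no such correction is needed under good reduction) and the mention of factorials (the denominators are just the integers $i\le m_f+1\le 2g-1<p$, which are $p$-adic units, so the Newton polygon count is unaffected).
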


To obtain an explicit upper bound on the size of $C(\Q_p) \cap \overline{J(\Q)}$, and hence $C(\Q)$, Coleman used his theory of $p$-adic integration on curves to construct $p$-adic integrals of 1-forms on $J(\Qp)$ that vanish on $J(\Q)$ and restrict them to $C(\Qp)$. Here, we follow the exposition in \cite{wetherell} in defining the Coleman integral.

Let $\omega_J \in \OmegaJ$, and $\lambda_{\omega_J}$ be the unique homomorphism $\lambda_{\omega_J} \colon J(\Q_p) \to \Q_p$ such that $d(\lambda_{\omega_J}) = \omega_J$. Consider the map induced by $\iota _P$
\[
\iota^* \colon \OmegaJ \to \OmegaC.
\]
Observe that $\iota^*$ is an isomorphism of vector spaces which is independent of the choice of $P \in C(\Q_p)$ \cite[Proposition 2.2]{Milne}.  

Define $\omega := \iota^*(\omega_J)$ to be the corresponding differential on $C$. On the Jacobian we have the natural pairing
\begin{align*}
\lambda \colon \OmegaJ \times J(\Q_p) & \to \Q_p\\
(\omega_J, R) & \mapsto \lambda_{\omega_J}(R) := \int_0^R \omega_J.
\end{align*}
Note that since $\lambda_{\omega_J}$ is a homomorphism, it vanishes on $J(\Qp)_{\text{tors}}$. Now given $P, Q \in C(\Q_p)$ we define 
\[
\int_P^Q \omega := \int_0^{[Q - P]} \omega_J,
\] 
hence for a fixed point $P \in C(\Q_p)$ and $\omega \in \OmegaC$ we get a function $\lambda_{\omega,P} \colon C(\Q_p) \to \Q_p$ with 
\[
\lambda_{\omega, P}(Q) := \int_P^{Q} \omega = \int_0^{[Q - P]} \omega_J = \lambda_{\omega_J}([Q - P]).
\]

We now restrict to the case where $g = g(C) = 3$ and $r = \rk J(\Q) = 1$, in which case $g - r =2$. The exposition below can be generalized whenever $r < g$. Let
\[
\Ann(J(\Q)) := \left\{ \omega_J \in \OmegaJ : \lambda_{\omega_J}(R) = \int_0^R \omega_J = 0 \text{ for all } R \in J(\Q)\right\}.
\]
This is a 2-dimensional $\Q_p$-vector space; hence there exist two linearly independent differentials $\alpha_J, \beta_J  \in \OmegaJ$ such that 
\[
\lambda_{\alpha_J}(R) = \lambda_{\beta_J}(R) = 0 \indent  \text{for all } R \in J(\Q). 
\]

Let $D$ be a $\Q$-rational divisor on $C$ of degree $r$, and consider the map $\iota_D \colon C \to J$ such that $Q \mapsto [rQ - D]$. Define 
\[
\lambda_{\omega, D}(Q) := \lambda_{\omega_J} \circ \iota_D(Q) =  \lambda_{\omega_J}(rQ - D).
\]
Consider the set
\begin{equation}\begin{aligned}\label{eqn:ZeroSet}
Z & := \{Q \in C(\Q_p) : \lambda_{\omega, D}(Q) = 0 \text{ for all } \omega \in \Ann(J(\Q))\} \\
& = \ker(\lambda_{\alpha,D}) \cap \ker(\lambda_{\beta,D}).
\end{aligned}\end{equation}
While a priori we have defined $Z$ in terms of $D$, it is actually independent of the choice of $D$, and $C(\Q) \subseteq Z$ \cite[\S 1.6]{wetherell}.

The above discussion indicates how we would handle the case when our hyperelliptic curve has an even degree model. However, since we restrict our attention to hyperelliptic curves $C$ with an odd degree model, we are guaranteed a rational point $\infty \in C(\Q)$ and we use $D = \infty$. Hence, we have two $\Q_p$-valued functions $\lambda_{\alpha,\infty}, \lambda_{\beta,\infty}$ on $C(\Q_p)$ whose common zeros capture the rational points of $C$.


\subsection{Computing Coleman integrals}
In order to compute $Z$, we need a way to evaluate $\int_P^Q \omega$ for an arbitrary $\omega \in \OmegaC$ and arbitrary $P, Q \in C(\Qp)$. Suppose that $p$ is a prime of good reduction for $C$ and let $\overline{C}$ be the reduction of $C$ modulo $p$, i.e.\ the special fiber of a minimal regular proper model of $C$ over $\Zp$. Then there exists a natural reduction map $C(\Qp) \to \overline{C}(\Fp)$. Define a \defi{residue disk} to be a fiber of the reduction map. To compute $\int_P^Q \omega$, we now consider two cases: either $P$ and $Q$ lie in the same residue disk, or they do not.

\subsubsection{Coleman integral within a residue disk} 
Let $P\in C(\Q_p)$. By a \defi{local coordinate} for $P$ we mean a rational function $t\in \Q_p(C)$ such that
\begin{enumerate}
	\item $t$ is a uniformizer at $P$; and
	\item the reduction of $t$ to a rational function for $\overline{C}$ is a uniformizer at $\overline{P}$.
\end{enumerate} 
Hence, a local coordinate $t$ at $P$ establishes a bijection
\begin{align*}
p\Z_p&\leftrightarrow \{Q\in C(\Q_p): \overline{Q}=\overline{P}\} \\
t&\leftrightarrow (x(t),y(t)),
\end{align*}
where $x(t),y(t)$ are Laurent series and $(x(0),y(0))=P$. 

Suppose now that $\omega\in H^0(C_{\Q_p},\Omega^1)$ is not identically zero modulo $p$. We will often use the fact that the expansion of $\omega$ in terms of $t$ has the form $w(t)dt$, for some $w(t)\in\Z_p[[t]]$ converging on the entire residue disk. Hence, if $\overline{Q} = \overline{P}$, we can compute $\int_P^Q \omega$ by formally integrating a power series in the local coordinate $t$ (\cite[Lemma 7.2]{wetherell}). Such definite integrals are referred to as \defi{tiny integrals}.

A local coordinate at a given point $P\in C(\Q_p)$ can be found using \cite[Algorithms 2-4]{JenniferNatoLecture}. In particular,
\begin{enumerate}
	\item If $y(\overline{P})\neq 0$, then $x(t)=t+x(P)$ and $y(t)$ is the unique solution to $y^2=f(x(t))$ such that $y(0)=y(P)$.
	\item If $y(\overline{P})= 0$ and $\overline{P}\neq \overline{\infty}$, then $y(t)=t+y(P)$ and $x(t)$ is the unique solution to $f(x)=y(t)^2$ such that $x(0)=x(P)$.
	\item If $\overline{P}=\overline{\infty}$, one first finds $x(t)=t^{-2}+O(1)$ by solving $f(x)=\frac{x^6}{t^2}$. Then $y(t)=\frac{x(t)^3}{t}$.
\end{enumerate}
In practice, in all three cases one can explicitly compute $(x(t),y(t))$ up to arbitrary $p$-adic and $t$-adic precision by Newton's method.

\subsubsection{Coleman integral between different residue disks} In our intended application of computing rational points, we will fix a basepoint as one endpoint of integration and consider the various Coleman integrals given by varying the other endpoint of integration over all residue disks. This makes it essential that the tiny integrals constructed in the previous section are consistent across the set of residue disks: in other words, we need a notion of analytic continuation between different residue disks. 

Coleman solved this problem by using Frobenius to write down a unique ``path'' between different residue disks and presented a theory of $p$-adic line integration on curves \cite{coleman} satisfying a number of natural properties, among them linearity in the integrand, additivity in endpoints, change of variables via rigid analytic maps (e.g., Frobenius), and the fundamental theorem of calculus. This was made algorithmic in \cite{explicitcoleman} for \emph{hyperelliptic} curves by solving a linear system induced by the action of Frobenius on Monsky-Washnitzer cohomology, with an implementation available in \texttt{Sage}. 

The upshot is that given two points $P, Q \in C(\Q_p)$, one can compute the definite Coleman integral from $P$ to $Q$ as $\int_P^Q \omega$ directly via \cite{explicitcoleman}, as well as the Coleman integral from $P$ to the residue disk of $Q$, by further computing a local coordinate $t_Q$ at $Q$ (such that $t_Q|_{t=0} = Q$), which gives: $$\int_P^{t_Q} \omega = \int_P^Q \omega + \int_Q^{t_Q} \omega = \int_P^Q \omega + \int_0^t \omega,$$ where $\int_P^Q \omega$ now plays the role of the constant of integration between different residue disks.

\section{The algorithm}\label{algorithm}
We now specialize to our case of interest, where $C$ is a genus $3$ hyperelliptic curve given by an odd degree model, i.e.,
\[
C \colon y^2 = F(x)
\] 
where $F(x) \in \Q[x]$ is monic of degree $7$. We will further assume that the Jacobian $J$ of $C$ has Mordell--Weil rank $1$ over $\Q$. Finally, we will assume that we have computed a point $P_0 \in C(\Q)$ with the property that $[P_0-\infty]$ is of infinite order in $J(\Q)$. (This last assumption is straightforward to remove.)

Fix an odd prime $p$ of good reduction for $C$, denote by $\overline{C}$ the base change of $C$ to $\Fp$ and let $C(\Q)_{\known}$ denote a list of known points in $C(\Q)$. Given this input, the algorithm in this section returns the set $Z$ of common zeros of $\lambda_{\alpha,\infty}$ and $\lambda_{\beta,\infty}$, as defined in Section \ref{sec:CCmethod}, excluding the known rational points $C(\Q)_{\known}$.

\subsection{Upper Bounds in Residue Disks} \label{sec:upper_bounds}
Define $\omega_i=(x^i/2y)dx$ for $i \in \{0,1,2\}$. These differentials form a basis for $\OmegaC$. Let $\alpha$ and $\beta$ be $1$-forms in $\OmegaC$ such that $\alpha_J$ and $\beta_J$ form a basis for $\Ann(J(\Q))$ and such that $\alpha$ and $\beta$ are not identically zero modulo $p$. We may assume that we are in one of the following two situations:
\begin{enumerate}
	\item $\alpha=\omega_0$ and $\beta$ is a $\Z_p$-linear combination of $\omega_1$ and $\omega_2$, or
	\item $\alpha$ is a $\Z_p$-linear combination of $\omega_0$ and $\omega_1$ and $\beta$ is a $\Z_p$-linear combination of $\omega_0$ and $\omega_2$.
\end{enumerate}

Let $f'(t)$ be the local expansion of $\alpha$ or $\beta$ in the residue disk of a point $\overline{Q}\in \overline{C}(\F_p)$. Ultimately we want to compute the zeros of a particular antiderivative $f(t)$ lying in $p\Zp$ up to a desired $p$-adic precision. In certain cases, we will be able to avoid this calculation by instead obtaining an upper bound for the number of zeros of $f(t)$ in $p\Zp$ which we know to be sharp. To do this, we use the theory of Newton polygons for $p$-adic power series (see, e.g., \cite[IV.4]{Koblitz}).

Given $f(t)\in \Q_p[[t]]$ such that $f'(t)\in \Z_p[[t]]$, let $\overline {f'}(t)= f'(t) \pmod{p}$ and define
\begin{equation} \label{zero-ord}
m_f := \mathrm{ord}_{t=0} \overline {f'}(t).
\end{equation}
The following result is \cite[Lemma 5.1]{McCallum-Poonen} and can be viewed as a corollary of the $p$-adic Weierstrass Preparation Theorem \cite[Ch. IV, \S 4, Theorem 14]{Koblitz}.
\begin{lemma}
	\label{UpperBound}
	Let $f(t)\in \Q_p[[t]]$ such that $f'(t)\in \Z_p[[t]]$. If $m_f < p-2$, then the number of roots of $f$ in $p\Z_p$ is less than or equal to $m_f+1$.
\end{lemma}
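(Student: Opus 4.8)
The plan is to rescale so that the roots in $p\Z_p$ become roots in the closed unit disk, and then count those roots with the Newton polygon (equivalently, via the Weierstrass Preparation Theorem cited in the statement). Write $f(t)=\sum_{n\ge 0} b_n t^n$. A root of $f$ in $p\Z_p$ is exactly a root $t=ps$ of $g(s):=f(ps)=\sum_{n\ge 0}c_n s^n$ with $s\in\Z_p$, where $c_n=p^n b_n$. The hypothesis $f'(t)\in\Z_p[[t]]$ says precisely that the coefficient $n b_n$ of $t^{n-1}$ in $f'$ lies in $\Z_p$ for every $n\ge 1$, so $\operatorname{ord}_p(b_n)\ge-\operatorname{ord}_p(n)$ and hence
\[
\operatorname{ord}_p(c_n)=n+\operatorname{ord}_p(b_n)\ge n-\operatorname{ord}_p(n),\qquad n\ge 1.
\]
In particular $\operatorname{ord}_p(c_n)\to\infty$, so $g$ converges on the closed unit disk and its Newton polygon is defined.

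Next I would read off the number of roots. The roots of $g$ of valuation $\ge 0$ (those in the closed unit disk) are counted, with multiplicity, by the total horizontal length of the segments of slope $\le 0$ of the Newton polygon of $g$; by convexity this length equals $N:=\max\{n:\operatorname{ord}_p(c_n)=\min_j\operatorname{ord}_p(c_j)\}$. This is exactly the corollary of the $p$-adic Weierstrass Preparation Theorem invoked in the statement. Since every root of $f$ in $p\Z_p$ yields a root of $g$ with $\operatorname{ord}_p(s)\ge 0$, it suffices to prove $N\le m_f+1$.

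The definition of $m_f$ unwinds to: $\operatorname{ord}_p(nb_n)\ge 1$ for $1\le n\le m_f$, while $\operatorname{ord}_p((m_f+1)b_{m_f+1})=0$. Because $m_f+1<p$ we have $\operatorname{ord}_p(m_f+1)=0$, so $\operatorname{ord}_p(c_{m_f+1})=m_f+1$ and therefore $\min_j\operatorname{ord}_p(c_j)\le m_f+1$. It remains to show that no index $n>m_f+1$ attains this minimum, i.e.\ that $\operatorname{ord}_p(c_n)\ge m_f+2$ whenever $n\ge m_f+2$. By the displayed inequality this reduces to the purely elementary claim that $n-\operatorname{ord}_p(n)\ge m_f+2$ for all $n\ge m_f+2$, which then forces $N\le m_f+1$ and finishes the proof.

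The crux of the argument---and the only place the hypothesis enters---is this last elementary estimate, which is exactly where $m_f<p-2$ is indispensable. The difficulty is that antidifferentiation of $t^{n-1}$ produces the denominator $n$, and the resulting valuation loss $\operatorname{ord}_p(n)$ can swallow the gain of $n$ coming from the rescaling precisely when $n$ is divisible by a large power of $p$. The condition $m_f<p-2$ guarantees $m_f+2\le p-1$, so that in the safe range $m_f+2\le n\le p-1$ one has $\operatorname{ord}_p(n)=0$ and the estimate is immediate, while for $n\ge p$ one writes $k=\operatorname{ord}_p(n)$, uses $n\ge p^k$, and checks $p^k-k\ge p-1\ge m_f+2$ for all $k\ge 1$. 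I expect this bookkeeping at the indices divisible by $p$ to be the main obstacle; note that it is sharp, since for $m_f=p-2$ the term $c_p$ could also have valuation $m_f+1$, producing an extra root and breaking the bound.
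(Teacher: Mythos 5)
Your proof is correct and is essentially the argument the paper points to: the paper gives no proof of its own for this lemma, citing \cite[Lemma 5.1]{McCallum-Poonen}, whose proof is exactly your rescaling $t\mapsto pt$, the coefficient estimate $\operatorname{ord}_p(c_n)\ge n-\operatorname{ord}_p(n)$, and the Newton polygon/Weierstrass preparation count, with the hypothesis $m_f<p-2$ entering precisely in the elementary inequality $n-\operatorname{ord}_p(n)\ge m_f+2$ for $n\ge m_f+2$. Your remark on sharpness at $n=p$ when $m_f=p-2$ is also the right observation.
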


\begin{rmk}
	\label{remarkRiemannRoch}
	 Note that if  $p>2g=6$ and $f'(t)$ is the local expansion of a regular $1$-form, then the Riemann-Roch Theorem implies that $m_f\leq 4$ and hence the condition $m_f<p-2$ of Lemma \ref{UpperBound} is always satisfied (cf. \cite[Theorem 5.3]{McCallum-Poonen}).
\end{rmk}

The following lemmas give a refinement of this result for our particular choice of $f$. We refer to a point of $C$ or $\overline C$ as a \defi{Weierstrass point} if it is fixed by the hyperelliptic involution.

\begin{lemma}
\label{nonWeierstrassclasses}
Let $f'(t)$ be the local expansion of $\alpha$ or $\beta$ in the residue disk of a point $\overline{Q}\in \overline{C}(\F_p)$. If $\overline{Q}$ is non-Weierstrass, then 
\[
m_f \leq 
\begin{cases}  
1 & \text{if } x(\overline{Q})\neq 0,\\ 
2 & \text{else}.
\end{cases}
\]
Moreover, the minimum of the orders of vanishing of $\alpha$ and $\beta$ at $\overline{Q}$ is less than or equal to $1$ for all non-Weierstrass $\overline{Q}$.
\end{lemma}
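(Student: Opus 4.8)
The plan is to reduce everything to the order of vanishing modulo $p$ of a polynomial numerator, exploiting that at a non-Weierstrass point the local expansion of $\tfrac{1}{2y}$ is a $p$-adic unit. First I would set up the local coordinate: since $\overline{Q}$ is non-Weierstrass (in particular $\overline{Q}\neq\overline{\infty}$), we have $y(\overline{Q})\neq 0$, so by case (1) of the local-coordinate recipe we may take $x(t)=t+x_0$ with $x_0=x(Q)$ for a lift $Q$, and $y(t)$ the unique power-series solution of $y^2=F(x(t))$ with $y(0)=y_0=y(Q)$. Then $dx=dt$, and the expansion of a form $\tfrac{g(x)}{2y}\,dx$ is $w(t)\,dt$ with $w(t)=\tfrac{g(t+x_0)}{2\,y(t)}$. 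Because $y_0\not\equiv 0 \pmod p$, the series $y(t)$ lies in $\Z_p[[t]]$ with unit constant term, so $\tfrac{1}{2y(t)}$ reduces to a unit in $\F_p[[t]]$. Consequently
\[
m_f \;=\; \mathrm{ord}_{t=0}\,\overline{w}(t) \;=\; \mathrm{ord}_{t=0}\,\overline{g}(t+x_0) \;=\; \mathrm{ord}_{x=\overline{x_0}}\,\overline{g}(x),
\]
the multiplicity of $\overline{x_0}=x(\overline{Q})$ as a root of the reduced numerator $\overline{g}\in\F_p[x]$.

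With this in hand the bound on $m_f$ follows from the degrees of the numerators. In both situations (1) and (2) write $\alpha=\tfrac{g_\alpha(x)}{2y}\,dx$ and $\beta=\tfrac{g_\beta(x)}{2y}\,dx$; then $g_\alpha$ has $x$-degree at most $1$ and $g_\beta$ has $x$-degree at most $2$, and since $\alpha,\beta$ are not identically zero modulo $p$, their reductions $\overline{g}_\alpha,\overline{g}_\beta$ are nonzero. A nonzero polynomial of degree $\leq d$ has every root of multiplicity $\leq d$, giving at once $m_\alpha\leq 1$ and $m_\beta\leq 2$ for all non-Weierstrass $\overline{Q}$. To sharpen $m_\beta$ to $\leq 1$ when $x(\overline{Q})\neq 0$, I would check that $\overline{g}_\beta$ can only have a double root at $x=0$: in situation (1), $\overline{g}_\beta=\overline{a}\,x+\overline{b}\,x^2=x(\overline{a}+\overline{b}\,x)$, whose only possible repeated root is $0$ (occurring exactly when $\overline{a}=0$); in situation (2), $\overline{g}_\beta=\overline{b_0}+\overline{b_2}\,x^2$, and since $p$ is odd a double root $r$ would force $2\overline{b_2}\,r=0$, hence $r=0$. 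Thus for $\overline{x_0}\neq 0$ the point $\overline{x_0}$ is a simple (or non-) root, so $m_\beta\leq 1$, while at $\overline{x_0}=0$ only $m_\beta\leq 2$ survives. This is precisely the stated case split.

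The ``moreover'' clause is then immediate from the asymmetry between $\alpha$ and $\beta$: the numerator $g_\alpha$ always has degree $\leq 1$, so $m_\alpha\leq 1$ for every non-Weierstrass $\overline{Q}$ (in situation (1), $\alpha=\omega_0$ has constant numerator and $m_\alpha=0$). Hence $\min(m_\alpha,m_\beta)\leq m_\alpha\leq 1$ regardless of $x(\overline{Q})$. The only genuinely delicate step is the double-root analysis for $\beta$, where one must use both the special shape of $g_\beta$ (no linear term in situation (2), a forced factor of $x$ in situation (1)) and the hypothesis that $p$ is odd; the remainder is bookkeeping with root multiplicities and the unit reduction of $1/(2y(t))$.
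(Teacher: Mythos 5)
Your proposal is correct and follows essentially the same route as the paper: take $t=x-x(Q)$ as local coordinate at a non-Weierstrass point, observe that $1/(2y(t))$ reduces to a unit so $m_f$ is the multiplicity of $x(\overline Q)$ as a root of the reduced numerator polynomial, bound that multiplicity by the numerator's degree, and check that a double root forces the numerator to be a scalar multiple of $x^2$ (hence $x(\overline Q)=0$), with the ``moreover'' clause coming from $\alpha$ always being a combination of $\omega_0$ and $\omega_1$. Your derivative-criterion case analysis for the double root is just a slightly more explicit version of the paper's one-line verification.
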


\begin{proof}
 By construction, the differential $f'$ is a linear combination of two of the differentials $\omega_i=(x^i/2y)dx$, $i=0,1,2$, and $f'$ is non-trivial modulo $p$. The assumption that $\overline{Q}$ is non-Weierstrass implies that $t=x-x(Q)$ is a local coordinate, where $Q$ is any lift of $\overline{Q}$ to characteristic zero. Write $f' = ((Ax^i+Bx^j)/2y)dx$, where $A,B\in\Z_p$, $i,j\in\{0,1,2\}$, $i < j$. Then in local coordinates we have
\begin{equation*}
f'(t)=\frac{A(t+x(Q))^i+B(t+x(Q))^j}{2y(t)}dt
\end{equation*}
where $y(t)$ has no zeros or poles in the residue disk. Since $A(t+x(Q))^i+B(t+x(Q))^j$ is a polynomial of degree less than or equal to $2$ in $t$, the first part of the first claim is proved. Furthermore, the polynomial has a double root modulo $p$ at $\overline{Q}$ if and only if $A\equiv 0\bmod p$, $j=2$, $B\not\equiv 0$, and $x(\overline{Q})= 0$; i.e.,\ if and only if $f'\equiv\frac{x^2}{2y}dx$ (up to rescaling) and $x(\overline{Q})=0$. The last statement also follows, since by construction $\alpha$  is a linear combination of $\omega_0$ and $\omega_1$ (see the beginning of \S \ref{sec:upper_bounds}).
\end{proof}

\begin{prop}
Let $p$ be an odd prime greater than or equal to $5$ of good reduction for $C$. Let $\overline{Q}\in\overline{C}(\F_p)$ be a non-Weierstrass point. Then the set
\begin{equation*}
\left\{P\in C(\Q_p): \overline{P}=\overline{Q}\text{ and }\int_{\infty}^P\alpha=\int_{\infty}^{P}\beta = 0\right\}
\end{equation*}
has size less than or equal to $2$.
\end{prop}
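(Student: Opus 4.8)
The plan is to bound the size of this set by bounding the number of common zeros in the residue disk of the two functions $f_\alpha(t) = \int_\infty^P \alpha$ and $f_\beta(t) = \int_\infty^P \beta$ expressed in a local coordinate $t$ at $\overline{Q}$. Since $\overline{Q}$ is non-Weierstrass, Lemma \ref{nonWeierstrassclasses} tells me that $\min(m_{f_\alpha}, m_{f_\beta}) \le 1$: at least one of the two forms vanishes to order at most $1$ at $\overline{Q}$. Call that form $\gamma \in \{\alpha, \beta\}$ and let $f_\gamma$ be the corresponding antiderivative. Because $p \ge 5$ and $m_{f_\gamma} \le 1 < p - 2$, the hypotheses of Lemma \ref{UpperBound} are satisfied, so the number of zeros of $f_\gamma$ in $p\Z_p$ is at most $m_{f_\gamma} + 1 \le 2$.

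The set in question is contained in the zero set of $f_\gamma$ alone (it is the intersection of the zero sets of $f_\alpha$ and $f_\beta$, hence contained in each), so its size is at most $m_{f_\gamma} + 1 \le 2$. This already gives the bound of $2$ directly from the two lemmas, so the main content is simply assembling them correctly. First I would fix a lift $Q$ of $\overline{Q}$ and the local coordinate $t = x - x(Q)$, noting this is legitimate precisely because $\overline{Q}$ is non-Weierstrass. Then I would invoke the analytic continuation machinery of Section \ref{preliminaries}: the function $P \mapsto \int_\infty^P \gamma$ restricted to the residue disk of $\overline{Q}$ is, up to the additive constant $\int_\infty^Q \gamma$, the formal antiderivative $\int_0^t \gamma$, whose derivative is the local expansion of $\gamma$ lying in $\Z_p[[t]]$.

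The one point requiring genuine care is ensuring the hypothesis ``$f_\gamma'(t) \in \Z_p[[t]]$'' of Lemma \ref{UpperBound} holds, and that $\gamma$ is not identically zero modulo $p$ so that $m_{f_\gamma}$ is well-defined and finite. The integrality is guaranteed by the remark in the subsection on tiny integrals: the local expansion of a regular $1$-form that is not identically zero modulo $p$ has the form $w(t)\,dt$ with $w(t) \in \Z_p[[t]]$. The non-vanishing modulo $p$ is part of the standing assumption on $\alpha$ and $\beta$ from the beginning of \S\ref{sec:upper_bounds}; combined with the conclusion of Lemma \ref{nonWeierstrassclasses} it shows that the form $\gamma$ achieving the minimal order of vanishing indeed satisfies $m_{f_\gamma} \le 1$ with the order being finite. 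I expect no genuine obstacle here, as the proposition is essentially a corollary of the two preceding lemmas; the only thing to be careful about is choosing the correct form $\gamma$ (the one with smaller vanishing order) rather than fixing one of $\alpha, \beta$ in advance, since for the degenerate case $f' \equiv (x^2/2y)\,dx$ with $x(\overline{Q}) = 0$ one of the two forms can vanish to order $2$, which would only yield the weaker bound $3$.
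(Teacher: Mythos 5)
Your argument is correct and is exactly the paper's proof, which simply cites Lemma \ref{UpperBound} and Lemma \ref{nonWeierstrassclasses}; you have correctly identified the one subtlety, namely that one must use the form $\gamma$ realizing the minimum order of vanishing (guaranteed to be at most $1$ by the ``moreover'' clause of Lemma \ref{nonWeierstrassclasses}) so that the bound is $2$ rather than $3$.
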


\begin{proof}
Follows from Lemma \ref{UpperBound} and Lemma \ref{nonWeierstrassclasses}.
\end{proof}

\begin{lemma}
\label{inftyclass}
Let $f'(t)$ be the local expansion of $\alpha$ or $\beta$ in the residue disk of the point $\overline{\infty}\in \overline{C}(\F_p)$. Then $m_f\in\{0,2,4\}$. In particular, the minimum of the orders of vanishing of $\alpha$ and $\beta$ at $\overline{\infty}$ is less than or equal to $2$.
\end{lemma}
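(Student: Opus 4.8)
```latex
The plan is to compute the local expansion of the differentials $\omega_0, \omega_1, \omega_2$ at the point $\overline{\infty}$ using the local coordinate $t$ described in case (3) of the local coordinate discussion, namely where $x(t) = t^{-2} + O(1)$ and $y(t) = x(t)^3/t$. Since $\alpha$ and $\beta$ are each $\Z_p$-linear combinations of the $\omega_i$, it suffices to understand the $t$-adic valuation of the leading term of each $\omega_i$ at $\overline{\infty}$ and then argue about which cancellations can occur when we take $\Z_p$-linear combinations (subject to the normalization in the two cases (1)--(2) at the start of \S\ref{sec:upper_bounds}).

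First I would substitute into $\omega_i = (x^i/2y)\,dx$. With $x = t^{-2}(1 + O(t^2))$ and $y = x^3/t = t^{-6}(1+O(t^2))/t\cdot(\dots)$, a direct computation gives $dx = (-2 t^{-3} + O(t))\,dt$, so that
\[
\omega_i = \frac{x^i}{2y}\,dx = \frac{t}{2}\, x^{i-3}\,dx = \frac{t}{2}\, t^{-2(i-3)}(1+O(t^2))\,(-2t^{-3}+O(t))\,dt.
\]
Collecting the leading power of $t$, one finds $\omega_i = (-t^{\,4-2i} + O(t^{\,6-2i}))\,dt$, so the leading $t$-order of the integrand $w_i(t)$ in $\omega_i = w_i(t)\,dt$ is $4-2i$, giving $t$-order $4$ for $\omega_0$, $2$ for $\omega_1$, and $0$ for $\omega_2$. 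These are the only even values in $\{0,2,4\}$, which already explains why $m_f$ is forced into the set $\{0,2,4\}$ rather than an arbitrary integer: the odd powers of $t$ vanish because of the parity imposed by the hyperelliptic structure at the Weierstrass point $\overline{\infty}$.

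Next I would argue that for a single $\Z_p$-linear combination $f' = A\omega_i + B\omega_j$ with $i<j$, the order $m_f = \mathrm{ord}_{t=0}\overline{f'}(t)$ is the minimum of the orders of the surviving terms modulo $p$, and in each of the allowed normalizations this minimum lies in $\{0,2,4\}$. Since the three leading $t$-orders $4,2,0$ are distinct, no cancellation of leading terms between distinct $\omega_i$ can occur (the leading terms sit in different $t$-degrees), so $m_f$ equals the smaller of the two indices' $t$-orders unless the corresponding coefficient vanishes mod $p$, in which case it jumps to the next even value in $\{0,2,4\}$. Thus in every case $m_f\in\{0,2,4\}$.

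For the final assertion, I would observe that $\alpha$ and $\beta$ are linearly independent modulo $p$ and cannot both be proportional to $\omega_0$; by the normalizations (1)--(2), at least one of $\alpha,\beta$ has a nonzero $\omega_1$ or $\omega_2$ component modulo $p$, forcing its $t$-order at $\overline{\infty}$ to be at most $2$. Hence the minimum of the orders of vanishing of $\alpha$ and $\beta$ is at most $2$. The main obstacle I anticipate is bookkeeping the error terms $O(t^2)$ carefully enough to be certain that no higher-order cancellation spoils the parity claim; this is handled by noting that the expansions $x(t), y(t)$ are even and odd series respectively (up to the overall leading power), so every $\omega_i$ expansion contains only the parity of powers dictated by its leading term, and cross-terms between $\omega_i$ and $\omega_j$ never collide in $t$-degree.
```
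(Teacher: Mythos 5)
Your proposal is correct and follows essentially the same route as the paper: expand $x(t)=t^{-2}+O(1)$, $y(t)=t^{-7}+O(t^{-5})$ at $\overline{\infty}$ to find that $\omega_i$ vanishes to order exactly $4-2i$, and then note that since these three orders are distinct even integers with unit leading coefficients, any $\Z_p$-linear combination not vanishing mod $p$ has $m_f\in\{0,2,4\}$ (your extra parity observation about even/odd series is fine but not needed, since the surviving leading term already sits at an even order). The paper's proof is just the one-line order computation, so your version supplies the same argument with more of the bookkeeping made explicit.
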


\begin{proof}
We may take $x(t)=t^{-2}+O(1)$, $y(t)=t^{-7}+O(t^{-5})$ (cf. \cite[Algorithm 4]{JenniferNatoLecture}). Then $x^idx/2y$ has a zero of order $4-2i$ at $t=0$.
\end{proof}

\begin{prop}
Let $p$ be an odd prime greater than or equal to $5$ of good reduction for $C$. Then the set
\begin{equation*}
\left\{P\in C(\Q_p): \overline{P}=\overline{\infty}\text{ and }\int_{\infty}^P\alpha=\int_{\infty}^{P}\beta = 0\right\}
\end{equation*}
has size less than or equal to $3$. In particular, there are at most two points different from the point at infinity and reducing to it modulo $p$ in the above set.
\end{prop}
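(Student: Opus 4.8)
The plan is to apply the Newton-polygon bound of Lemma \ref{UpperBound} to the residue disk of $\overline{\infty}$, exactly as the preceding proposition did for non-Weierstrass disks, but now feeding in the sharper vanishing data supplied by Lemma \ref{inftyclass}. First I would invoke Lemma \ref{inftyclass}, which tells us that for each of $\alpha$ and $\beta$ the quantity $m_f$ lies in $\{0,2,4\}$ and, crucially, that the \emph{minimum} of the two values $m_{\alpha}$ and $m_{\beta}$ at $\overline{\infty}$ is at most $2$. The point $P$ ranges over the residue disk of $\overline{\infty}$ and must satisfy $\int_{\infty}^P \alpha = \int_{\infty}^P \beta = 0$ simultaneously, so the set in question is contained in the zero locus of whichever of the two antiderivatives has the smaller $m_f$; I would work with that one and call its local expansion $f'(t)$, so that $m_f \le 2$.

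Next I would check the hypothesis of Lemma \ref{UpperBound}. Since $p \ge 5$, we have $p - 2 \ge 3 > 2 \ge m_f$, so $m_f < p-2$ holds, and the lemma gives that the number of roots of $f$ in $p\Z_p$ is at most $m_f + 1 \le 3$. Under the bijection between $p\Z_p$ and the points of the residue disk via the local coordinate $t$ at $\overline{\infty}$, these roots correspond bijectively to the points $P$ in that disk with $\int_{\infty}^P f' = 0$, which contains the desired set. This yields the bound of $3$.

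For the final sentence I would observe that the basepoint $\infty$ itself is one such point: it lies in the residue disk of $\overline{\infty}$ and trivially satisfies $\int_{\infty}^{\infty}\alpha = \int_{\infty}^{\infty}\beta = 0$ (equivalently, $t=0$ is a root of every antiderivative normalized to lie in $p\Z_p$). Hence of the at-most-$3$ points, one is $\infty$, leaving at most two points distinct from $\infty$.

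The step requiring the most care is the reduction to a single antiderivative. The bound from Lemma \ref{UpperBound} applies to the zero set of one power series, whereas the set we want is the \emph{intersection} of the zero sets of the antiderivatives of $\alpha$ and $\beta$. The intersection is contained in each individual zero set, so it is bounded by the smaller of the two counts; the content of Lemma \ref{inftyclass} is precisely that $\min(m_{\alpha}, m_{\beta}) \le 2$ at $\overline{\infty}$, which guarantees that at least one of the two forms furnishes a usable bound of $m_f + 1 \le 3$ rather than the weaker $4+1 = 5$ one would get from an $m_f = 4$ form. I would make sure the argument selects that smaller-order form and does not inadvertently use the larger one.
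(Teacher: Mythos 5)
Your proof is correct and follows exactly the route the paper intends: the paper leaves this proposition's proof implicit, but (as with the analogous non-Weierstrass proposition, whose proof is stated as ``Follows from Lemma \ref{UpperBound} and Lemma \ref{nonWeierstrassclasses}'') the argument is precisely to take the form among $\alpha,\beta$ with $m_f\le 2$ guaranteed by Lemma \ref{inftyclass}, apply the Newton-polygon bound of Lemma \ref{UpperBound} to get at most $m_f+1\le 3$ zeros in the disk, and note that $\infty$ accounts for one of them. Your attention to selecting the form with the smaller order of vanishing, rather than one with $m_f=4$, is exactly the point of the ``in particular'' clause of Lemma \ref{inftyclass}.
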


\begin{lemma}
\label{Weierstrassclasses}
Let $f'(t)$ be the local expansion of $\alpha$ or $\beta$ in the residue disk of a point $\overline{Q}\in \overline{C}(\F_p)$ (with the notation of the algorithm). If $\overline{Q}$ is Weierstrass, then 
\[
m_f \in 
\begin{cases}  
\{0,2\} & \text{if } x(\overline{Q})\neq 0,\\ 
\{0,2,4\} & \text{else}. 
\end{cases}
\]
Moreover, the minimum of the orders of vanishing of $\alpha$ and $\beta$ at $\overline{Q}$ is less than or equal to $2$.
\end{lemma}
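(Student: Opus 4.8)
The plan is to mimic the local computation carried out in Lemma \ref{nonWeierstrassclasses} and Lemma \ref{inftyclass}, but now using the local coordinate appropriate to a finite Weierstrass point. First I would recall from the algorithm (case (2) of the local-coordinate recipe in \S\ref{preliminaries}) that if $\overline{Q}$ is Weierstrass with $\overline{Q}\neq\overline{\infty}$, then $y(\overline{Q})=0$, so $t=y$ is a local coordinate: we set $y(t)=t+y(Q)$ and take $x(t)$ to be the unique solution of $F(x)=y(t)^2$ with $x(0)=x(Q)$. The key structural input is that $\overline{Q}$ being a Weierstrass point means $x(\overline{Q})$ is a simple root of $\overline{F}$, so $\overline{F}'(x(\overline{Q}))\neq 0$; differentiating $F(x(t))=y(t)^2=(t+y(Q))^2$ and reducing mod $p$ shows that $x(t)$ has the form $x(\overline{Q})+c\,t^2+O(t^3)$ with $c\not\equiv 0$, i.e.\ $x(t)-x(\overline{Q})$ vanishes to order exactly $2$ in $t$.

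Next I would expand $\omega_i=(x^i/2y)\,dx$ in this coordinate. Since $dx$ in terms of $t$ picks up a factor vanishing to order $1$ (from $x(t)-x(\overline{Q})$ having a double zero, $dx/dt$ vanishes to order $1$) while $1/2y=1/(2t+2y(Q))$ is a unit times $1/t$ when $y(Q)\equiv 0$ — and here $y(Q)\equiv y(\overline{Q})=0$ forces $y(t)=t+y(Q)$ to reduce to $t$, so $1/y$ contributes a pole of order $1$. Thus the pole from $1/y$ and the zero from $dx$ cancel, leaving $\omega_i$ with local expansion $(\text{unit})\cdot (x(t)-x(\overline{Q})+x(\overline{Q}))^i\cdot(\text{unit})\,dt$, whose order of vanishing is governed entirely by the factor $x(t)^i$. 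Writing $f'=((Ax^i+Bx^j)/2y)\,dx$ as in Lemma \ref{nonWeierstrassclasses}, I would read off $m_f=\mathrm{ord}_{t=0}\overline{f'}(t)$ by expanding $\overline{Ax^i+Bx^j}$ in powers of $t$, using $x(t)\equiv x(\overline{Q})+c t^2\pmod p$. When $x(\overline{Q})\neq 0$ the constant term $\overline{A}x(\overline{Q})^i+\overline{B}x(\overline{Q})^j$ is the obstruction, and the order jumps in steps of $2$ (since only even powers of $t$ appear in $x(t)$ mod $p$), giving $m_f\in\{0,2\}$; when $x(\overline{Q})=0$ the monomials $x^i$ themselves vanish to higher even order, allowing $m_f\in\{0,2,4\}$.

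The main obstacle, and the place to be careful, is correctly tracking parity: I must verify that the reduced expansion $\overline{x}(t)$ genuinely contains \emph{only} even powers of $t$ modulo $p$, so that $m_f$ is forced to be even rather than merely bounded. This follows because the hyperelliptic involution $y\mapsto -y$ fixes $\overline{Q}$ and acts as $t\mapsto -t$ on the local coordinate $y$, while $x$ is invariant, so $x(t)=x(-t)$ as a power series and hence is even in $t$; I would state this symmetry argument explicitly as it pins down the parity cleanly without grinding through the Newton iteration. The final ``minimum of the orders of vanishing'' claim then follows exactly as in the previous lemmas: by the setup at the start of \S\ref{sec:upper_bounds}, $\alpha$ is a $\Z_p$-linear combination of $\omega_0$ and $\omega_1$ only, and the case $x(\overline{Q})=0$ that would force $m_f=4$ requires the $\omega_2$-component to dominate (as in the double-root analysis of Lemma \ref{nonWeierstrassclasses}), so $\alpha$ cannot vanish to order $4$, yielding the bound $2$ on the minimum.
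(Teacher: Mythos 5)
Your proof is correct and follows essentially the same route as the paper's: take $t=y$ as the local coordinate, use that $\overline{F}'(x(\overline{Q}))\not\equiv 0$ (good reduction) to get $x(t)=x(\overline{Q})+ct^2+O(t^4)$ with $c\not\equiv 0$ so that $dx/2y$ is a unit, read off $m_f$ from the order of vanishing of $\overline{A}\,x(t)^i+\overline{B}\,x(t)^j$, and conclude the bound on the minimum from the fact that $\alpha$ has no $\omega_2$-component by construction. Your hyperelliptic-involution symmetry argument for the evenness of $x(t)$ in $t$ is a clean explicit justification of a parity fact the paper only asserts implicitly through the Newton iteration.
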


\begin{proof}
In this case we may take $y=t$ and solve for $x$ using $y^2=F(x)$. In particular, then $x(t)=x(Q)+\frac{t^2}{F'(x(Q))}+O(t^4) \mod p$ (cf. \cite[Algorithm 3]{JenniferNatoLecture}). Therefore, $dx/2y$ has no zero or pole at $t=0$, $x^i dx/2y$ has either no zero or pole or a zero of order $2i$ if $\overline{Q}=(0,0)$.
Now consider
\begin{equation*}
f'(t)=\left(A\left(x(Q)+\frac{t^2}{f'(x(Q))}+O(t^4)\right)^i+B\left(x(Q)+\frac{t^2}{f'(x(Q))}+O(t^4)\right)^j\right) u(t)dt,
\end{equation*}
where $u(t)$ is a unit power series and $A$ is non-zero modulo $p$. For any choice of $i,j\in\{0,1,2\}$, $i< j$, it can be verified that $m_f\in \{0,2\}$ by distinguishing between the cases $x(\overline{Q})=0$ or $x(\overline{Q})\neq 0$. If $A\equiv 0\mod p$ when $i=0$ or $1$ and $j=2$ then $m$ equals $4$ if $x(\overline{Q})=0$. However, by construction, $\alpha$ and $\beta$ cannot both be of this form.
\end{proof}

\begin{prop}
Let $p$ be an odd prime greater than or equal to $5$ of good reduction for $C$. Let $\overline{Q}\in\overline{C}(\F_p)$ be a finite Weierstrass point. Then the set
\begin{equation*}
\left\{P\in C(\Q_p): \overline{P}=\overline{Q}\text{ and }\int_{\infty}^P\alpha=\int_{\infty}^{P}\beta = 0\right\}
\end{equation*}
has size less than or equal to $3$.
\end{prop}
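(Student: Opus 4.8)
The plan is to reduce the statement to a root count for a single $p$-adic power series, exactly as in the non-Weierstrass case treated above, and then to invoke the Newton polygon bound of Lemma \ref{UpperBound} together with the refined order-of-vanishing estimate of Lemma \ref{Weierstrassclasses}.

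First I would fix a local coordinate $t$ at $\overline{Q}$ (namely $y=t$, as in Lemma \ref{Weierstrassclasses}), so that the points $P\in C(\Q_p)$ with $\overline{P}=\overline{Q}$ correspond bijectively to $t\in p\Z_p$. For such $P$, write $\int_{\infty}^P\alpha=f_{\alpha}(t)$ and $\int_{\infty}^P\beta=f_{\beta}(t)$, where $f_{\alpha},f_{\beta}\in\Q_p[[t]]$ are the Coleman antiderivatives whose derivatives are the local expansions of $\alpha$ and $\beta$ (the constant of integration being $\int_{\infty}^Q\alpha$, resp. $\int_{\infty}^Q\beta$). By the normalization adopted at the start of \S\ref{sec:upper_bounds} these derivatives lie in $\Z_p[[t]]$, so Lemma \ref{UpperBound} applies to each. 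The set in question is $\{t\in p\Z_p : f_{\alpha}(t)=f_{\beta}(t)=0\}$, which is contained in the root set of \emph{either} series.

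The key step is to pass to whichever of the two forms vanishes to smaller order at $\overline{Q}$. By the final assertion of Lemma \ref{Weierstrassclasses}, $\min(m_{f_{\alpha}},m_{f_{\beta}})\le 2$; let $f$ denote the corresponding antiderivative, so that $m_f\le 2$. Because $p\ge 5$ we have $p-2\ge 3>2\ge m_f$, so the hypothesis $m_f<p-2$ of Lemma \ref{UpperBound} is met. Hence the number of roots of $f$ in $p\Z_p$ is at most $m_f+1\le 3$, and since the common zero set injects into this root set, it has size at most $3$.

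The only genuine subtlety is the need for the sharper bound $\min(m_{f_{\alpha}},m_{f_{\beta}})\le 2$ rather than the generic estimate $m_f\le 4$: in the worst case $x(\overline{Q})=0$ a single form may vanish to order $4$, and for the smallest admissible prime $p=5$ one would then have $m_f=4\not<3=p-2$, so Lemma \ref{UpperBound} could not be applied to that form directly. Using that $\alpha$ and $\beta$ cannot both be (a rescaling of) $\omega_2$ modulo $p$ with $x(\overline{Q})=0$ — which is exactly the content of the last line of Lemma \ref{Weierstrassclasses}, reflecting that they are built from distinct pairs among the $\omega_i$ — circumvents this and keeps the count at $3$.
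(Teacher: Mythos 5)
Your argument is correct and is exactly the intended proof: the paper derives this proposition (implicitly, in parallel with the non-Weierstrass case) from Lemma \ref{UpperBound} combined with the final assertion of Lemma \ref{Weierstrassclasses}, i.e.\ one of $\alpha,\beta$ has $m_f\le 2$ at $\overline{Q}$, so its antiderivative has at most $m_f+1\le 3$ roots in $p\Z_p$ and the common zero set is contained in that root set. Your closing remark about why the sharper bound $\min(m_{f_\alpha},m_{f_\beta})\le 2$ is needed when $p=5$ and $x(\overline{Q})=0$ is a correct and worthwhile observation, fully consistent with the paper's setup.
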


\subsection{Roots of $p$-adic power series}

Let $f'(t)$ be the local expansion of $\alpha$ (resp., $\beta$) in a residue disk, and let $f(t)$ be an antiderivative of $f'(t)$ whose constant term is either zero or the Coleman integral of $\alpha$ (resp., $\beta$) between $\infty$ and a $\Q_p$-rational point on $C$. To provably determine the roots of $f(t)$ lying in a residue disk up to a desired $p$-adic precision, we need to do the following:
\begin{itemize}
\item make sure that we truncate at a $p$-adic precision $p^N$ that is able to detect all the roots (up to $O(p^n)$ where $n=N-k$, see Proposition \ref{wheretotruncate}); 
\item determine $M$ such that to compute a root up to $O(p^n)$, we only need to consider the power series up to $O(t^M)$ where the coefficient of $t^i$ is in $O(p^n)$ for all $i\geq M$ if the roots are simple and $f$ is suitably normalized (i.e. $f\in \Z_p[[t]] \setminus p \Z_p[[t]]$).
\end{itemize}
Write $f(t)=f_M(t)+O(t^{M})$ where $M$ is an integer greater than or equal to $m_f+2$ and $f_M(t)$ is a polynomial of degree less than or equal to $M-1$. 
Then $f(t)$ and $f_M(t)$ have the same number of roots in $\Cp$ of $p$-adic valuation greater than or equal to $1$, as can be deduced from the same considerations on the Newton polygon of $f(t)$ which imply Lemma \ref{UpperBound} (for more details, see the proof of \cite[Lemma 5.1]{McCallum-Poonen}).
We are interested in the zeros of $f(pt)$ in $\Z_p$. Note that 
\[
f(pt)-f_M(pt)\in O(p^n,t^M) \text{ for some } n\geq M.
\] 
Hence, $f(pt)$ and $f_M(pt)$ as polynomials in $\Z/p^n\Z$ have exactly the same zeros (including multiplicities). Furthermore, if a zero of $f(pt)$ (and $f_M(pt)$) modulo $p^n$ is simple, then it lifts to a root of $f(pt)$ in $\Z_p$ by an inductive application of Hensel's lemma.

To compute a suitable choice of $M$, we require two more lemmas.

\begin{lemma}
	\label{nonanomalous}
	Let $\omega_i=(x^i/2y)dx$ for some $i\in\{0,1,2\}$, and $\lambda_i=\int_{\infty}^{P_0}\omega_i$. If $\overline{[P_0-\infty]}\in \overline{J}(\F_p)$ has order prime to $p$, then $\ord(\lambda_i)\geq 1$. In particular,  this holds if $p$ is a prime of non-anomalous reduction for $J$. 
\end{lemma}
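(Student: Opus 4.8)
The plan is to exploit the identity $\lambda_i = \lambda_{\omega_{i,J}}([P_0-\infty])$, where $\lambda_{\omega_{i,J}}\colon J(\Qp)\to\Qp$ is the \emph{homomorphism} attached to $\omega_{i,J}$ in Section~\ref{sec:CCmethod}, and to convert the hypothesis on the reduction of $[P_0-\infty]$ into an integrality statement via the formal group of $J$. First I would set $R := [P_0-\infty]\in J(\Qp)$ and let $m$ be the order of $\overline{R}\in\overline{J}(\Fp)$, which by assumption is prime to $p$. Since $p$ is a prime of good reduction, the Néron (abelian) model yields the short exact sequence
\[
0 \to \widehat{J}(p\Zp) \to J(\Qp) \xrightarrow{\mathrm{red}} \overline{J}(\Fp) \to 0,
\]
where $\widehat{J}$ is the formal group of $J$; hence $mR$ lies in the kernel of reduction $\widehat{J}(p\Zp)$. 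Because $\lambda_{\omega_{i,J}}$ is a homomorphism, $m\lambda_i = \lambda_{\omega_{i,J}}(mR)$, so it suffices to prove $\lambda_{\omega_{i,J}}(mR)\in p\Zp$: as $\gcd(m,p)=1$, the integer $m$ is a unit in $\Zp$, and dividing back keeps us inside the ideal $p\Zp$, giving $\ord(\lambda_i)\ge 1$.

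The key step is to identify the restriction of $\lambda_{\omega_{i,J}}$ to the kernel of reduction with the formal logarithm. Under the isomorphism $\iota^*$, the differential $\omega_{i,J}$ corresponds to the integral differential $\omega_i=(x^i/2y)\,dx$ (recall $p$ is odd, so the factor $2$ is a unit), so $\omega_{i,J}$ extends to a regular invariant differential on the integral model and thus has $\Zp$-integral formal expansion. The restriction of $\lambda_{\omega_{i,J}}$ to $\widehat{J}(p\Zp)$ is then the unique formal homomorphism to $\G_a$ whose differential is $\omega_{i,J}$, i.e. the associated formal logarithm $\ell(T)=\int\omega_{i,J}$. Writing $\ell(T)=\sum_n c_nT^n$, the linear part has $\Zp$-coefficients while the higher terms satisfy $n\,c_n\in\Zp$, so for any $S\in\widehat{J}(p\Zp)$ (whose formal coordinates $T(S)$ lie in $(p\Zp)^g$) the degree-$n$ contribution has valuation at least $n-\ord(n)\ge 1$. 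This is exactly the denominator estimate underlying the tiny-integral bound already used for power series with $\Zp$-integral antiderivative. Therefore $\lambda_{\omega_{i,J}}(mR)=\ell(T(mR))\in p\Zp$, which completes the argument.

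For the final assertion, recall that $p$ is a prime of non-anomalous reduction for $J$ precisely when $p\nmid \#\overline{J}(\Fp)$. In that case every element of $\overline{J}(\Fp)$, and in particular $\overline{[P_0-\infty]}$, has order dividing $\#\overline{J}(\Fp)$ and hence prime to $p$, so the hypothesis of the lemma is automatically satisfied.

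The main obstacle I anticipate is not the arithmetic of the estimate but the compatibility claim in the second paragraph: namely, that the Coleman integral $\lambda_{\omega_{i,J}}$ genuinely coincides on $\widehat{J}(p\Zp)$ with the formal logarithm of the invariant differential $\omega_{i,J}$. Once this is pinned down—using that both are homomorphisms to $\G_a$ with the same differential and that the formal logarithm converges on $\widehat{J}(p\Zp)$—the integrality $\lambda_{\omega_{i,J}}(mR)\in p\Zp$ is just the standard $n-\ord(n)\ge 1$ estimate, and the conclusion $\ord(\lambda_i)\ge 1$ follows immediately by dividing by the $p$-adic unit $m$.
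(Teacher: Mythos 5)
Your proposal is correct and follows essentially the same route as the paper's proof: multiply $[P_0-\infty]$ by the prime-to-$p$ order of its reduction to land in the kernel of reduction, identify the integral there with formal integration of a $\Z_p$-integral expansion of $\omega_{J,i}$ in local coordinates, and evaluate at coordinates lying in $p\Z_p$. Your write-up merely makes explicit the $n-\ord(n)\geq 1$ denominator estimate and the identification with the formal logarithm, both of which the paper leaves implicit.
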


\begin{proof}
	Let $n$ be the order of the reduction of $[P_0-\infty]$ modulo $p$. Then $Q = n [P_0-\infty]\in J_1(\Q_p)$, the kernel of reduction at $p$, and we have $\int_{\infty}^{P_0}\omega_i=\frac{1}{n}\int_{0}^Q\omega_{J,i}$, where $\iota^*(\omega_{J,i})=\omega_i$. Now $\int_{0}^Q \omega_{J,i}$ can be computed by writing $\omega_{J,i}$ as a power series in $\Z_p [[z_1,z_2,z_3]]$ where $z_1,z_2,z_3$ is a local coordinate system for $J_1(\Q_p)$ around $0$, formally integrating and evaluating at $z_1(Q),z_2(Q),z_3(Q)$. 
\end{proof}

\begin{lemma}
\label{val}
We have $f(pt)=\sum_{i=0}^{\infty}b_it^i=\sum_{j=0}^{\infty}\frac{a_j p^{j+1}}{j+1}t^{j+1}+c$, where $c\in \Q_p$, $a_j\in\Z_p$ for all $j\geq 0$. Therefore for all $i\geq 1$, $\ord(b_i)\geq i-\ord(i)$. Furthermore if $p^2\nmid \#J(\F_p)$ then $c\in\Z_p$.
\end{lemma}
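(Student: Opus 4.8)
The plan is to separate the three assertions: the explicit power-series form of $f(pt)$, the valuation estimate $\ord(b_i)\ge i-\ord(i)$, and the integrality of the constant $c$. The first two are purely formal. Since $f'(t)\in\Z_p[[t]]$, I would write $f'(t)=\sum_{j\ge0}a_jt^j$ with $a_j\in\Z_p$; any antiderivative then has the shape $f(t)=c+\sum_{j\ge0}\frac{a_j}{j+1}t^{j+1}$, and substituting $t\mapsto pt$ gives exactly $f(pt)=c+\sum_{j\ge0}\frac{a_jp^{j+1}}{j+1}t^{j+1}$, matching the claimed expression with $b_0=c$ and $b_i=\frac{a_{i-1}p^{i}}{i}$ for $i\ge1$. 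The valuation bound is then immediate: $\ord(b_i)=\ord(a_{i-1})+i-\ord(i)\ge i-\ord(i)$, because $a_{i-1}\in\Z_p$.

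The substantive part is the last claim, that $p^2\nmid\#J(\F_p)$ forces $c\in\Z_p$. If the constant term is $0$ there is nothing to prove, so I assume $c=\int_{\infty}^{P}\omega$ for some $P\in C(\Q_p)$ and $\omega\in\{\alpha,\beta\}$. By the definitions in \S\ref{sec:CCmethod} this equals $\lambda_{\omega_J}([P-\infty])$, where $\omega_J$ is the $\Z_p$-linear combination of the $\omega_{J,i}$ corresponding to $\omega$ under the isomorphism $\iota^*$. I would then run the same reduction used in the proof of Lemma \ref{nonanomalous}: letting $R=[P-\infty]$ and $n$ the order of its reduction $\overline R$ in $J(\F_p)$, we have $nR\in J_1(\Q_p)$ and, since $\lambda_{\omega_J}$ is a homomorphism, $c=\frac1n\,\lambda_{\omega_J}(nR)=\frac1n\int_0^{nR}\omega_J$.

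It remains to control the two factors. Because $nR$ lies in the kernel of reduction, its formal coordinates lie in $p\Z_p$; writing $\omega_J$ as a power series with $\Z_p$-coefficients in a local coordinate system for $J_1(\Q_p)$ around $0$ and integrating formally (exactly as in Lemma \ref{nonanomalous}) shows $\int_0^{nR}\omega_J\in p\Z_p$, i.e.\ $\ord\bigl(\int_0^{nR}\omega_J\bigr)\ge1$. Hence $\ord(c)\ge1-\ord(n)$. Finally, $n$ divides $\#J(\F_p)$ by Lagrange's theorem, so the hypothesis $p^2\nmid\#J(\F_p)$ gives $\ord(n)\le1$ and therefore $\ord(c)\ge0$, that is, $c\in\Z_p$.

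The main obstacle is verifying that $\int_0^{nR}\omega_J\in p\Z_p$: one must check that formally integrating a $\Z_p$-integral invariant differential over a point of the formal group yields a value in $p\Z_p$ rather than merely in $\Q_p$. This comes down to the elementary bound $m-\ord(m)\ge1$ for $m\ge1$, which guarantees that every integrated monomial $\frac{z^{m}}{m}$ with $z\in p\Z_p$ has valuation at least $1$; this is precisely the computation underlying Lemma \ref{nonanomalous}, so I would simply cite it. Once this is in place, the arithmetic hypothesis $p^2\nmid\#J(\F_p)$ enters only through Lagrange's theorem, bounding $\ord(n)\le1$, which is what upgrades $\ord(c)\ge1-\ord(n)$ to the desired integrality.
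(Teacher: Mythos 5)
Your proof is correct and follows essentially the same route as the paper: the formal identities are immediate from $f'(t)\in\Z_p[[t]]$, and the integrality of $c$ is obtained by the same reduction as in Lemma \ref{nonanomalous} --- multiplying $[P-\infty]$ by the order $n$ of its reduction to land in $J_1(\Q_p)$, formally integrating a $\Z_p$-integral form there to get valuation at least $1$, and then using $n\mid\#J(\F_p)$ together with $p^2\nmid\#J(\F_p)$ to bound $\ord(n)\le 1$. The paper states this only as ``similar to Lemma \ref{nonanomalous}''; you have simply made explicit how the hypothesis enters via Lagrange's theorem.
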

\begin{proof}
The first assertion is clear. For the latter, recall that $c$ is either $0$ or of the form $\int_{\infty}^{Q}\gamma$, for some $Q\in C(\Q_p)$ and $\gamma\in\{\alpha, \beta\}$. The proof is then similar to Lemma \ref{nonanomalous}.
\end{proof} 

By Lemma \ref{val}, we know that $f(pt)$ has coefficients in $\Zp$, except possibly when $p^2|\#J(\Fp)$. Let $k$ be the minimum of the valuations of the coefficients of $f(pt)$. Note that, since $f'(t) \mod p$ has order of vanishing equal to $m_f$, if $m_f<p-2$, it follows that the valuation of the coefficient of $t^{m_f+1}$ in $f(pt)$ is precisely $m_f+1$. Therefore $k \leq m_f+1$. Furthermore, for $i>m_f+1$, we have $\ord(b_i)\geq i-\ord(i)>i-(i-m_f-1) = m_f + 1$.

\begin{prop} \label{wheretotruncate}
Let $f(t)$ be an antiderivative of $\alpha$ or $\beta$, let $m_f<p-2$, and let $k$ be the minimal valuation of the coefficients of $f(pt)$. Fix an integer $N$ such that $m_f+2\leq N\leq p^p-p$. Let $ap^e$ be the smallest integer greater than or equal to $N$ with $p\nmid a$ and $e\geq 1$, and set
\[
M= 
\begin{cases}  
ap^e+1 & \text{if } ap^e-e< N,\\ 
N & \text{else}. 
\end{cases}
\]
Then each simple root of $f_M(pt)$ in $\Z/p^{N-k}\Z$ equals the approximation modulo $p^{N-k}$ of a root of $f(pt)$. Furthermore, if all such roots are simple, then these are all the roots of $f(pt)$ in $\Z_p$. 
\end{prop}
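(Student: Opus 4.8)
The plan is to pick $M$ just large enough that discarding the tail $\sum_{i\ge M}b_i t^i$ of $f(pt)$ changes nothing modulo $p^{N-k}$, and then to feed the resulting truncation into the Newton-polygon-and-Hensel mechanism already assembled in the paragraph preceding the statement. Writing $f(pt)=\sum_{i\ge 0}b_i t^i$, Lemma \ref{val} supplies both the normalization (the coefficients lie in $\Zp$, with minimal valuation $k\le m_f+1$) and the decay estimate $\ord(b_i)\ge i-\ord(i)$ for $i\ge 1$. Since we only evaluate at $t\in\Zp$, each tail term contributes valuation at least $\ord(b_i)\ge i-\ord(i)$, so the whole scheme reduces to verifying that $M$ has been chosen to make
\[
i-\ord(i)\ge N\qquad\text{for all } i\ge M.
\]
Granting this, $f(pt)\equiv f_M(pt)\pmod{p^N}$ coefficientwise; dividing by $p^k$ (legitimate because the minimal valuation is attained among coefficients of degree $\le m_f+1<M$, hence among the retained ones) gives $p^{-k}f(pt)\equiv p^{-k}f_M(pt)\pmod{p^{N-k}}$ with both series in $\Zp[[t]]\setminus p\Zp[[t]]$, and $N-k\ge 1$ since $N\ge m_f+2$.

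The combinatorial inequality above is the crux, and the hypothesis $N\le p^p-p$ is exactly what makes it true. I would call an index $i\ge N$ \emph{bad} if $i-\ord(i)<N$. If $p\nmid i$ then $i-\ord(i)=i\ge N$, so every bad index is a multiple of $p$; as $ap^e$ is the smallest multiple of $p$ that is $\ge N$, there are no bad indices in $[N,ap^e)$, and $ap^e$ itself is bad exactly when $ap^e-e<N$. The key step is to show that \emph{no} index exceeding $ap^e$ is bad, which is what pins down the two cases in the definition of $M$. Suppose $i>ap^e$ is bad, and put $e'=\ord(i)\ge 1$. On the one hand $i$ is a multiple of $p$ strictly larger than $ap^e$, so $i\ge ap^e+p\ge N+p$, while badness gives $i<N+e'$; hence $e'>p$. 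On the other hand $p^{e'}\mid i$ forces $p^{e'}\le i<N+e'\le(p^p-p)+e'$, so $p^{e'}-e'<p^p-p$, and since $x\mapsto p^x-x$ is strictly increasing on $[1,\infty)$ this forces $e'<p$ — a contradiction. Thus the bad indices form a subset of $\{ap^e\}$, and the rule $M=ap^e+1$ when $ap^e-e<N$ and $M=N$ otherwise makes $M$ strictly exceed every bad index, proving the displayed inequality and hence $\ord(b_i)\ge N$ for all $i\ge M$.

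It then remains only to convert the coefficientwise congruence into the statement about roots, and here I would invoke the argument set up just before the proposition (resting in turn on the Newton-polygon analysis in the proof of \cite[Lemma 5.1]{McCallum-Poonen}). Because $p^{-k}f(pt)$ and $p^{-k}f_M(pt)$ agree modulo $p^{N-k}$, they have identical zeros in $\Z/p^{N-k}\Z$ with multiplicity; any such zero that is simple lifts, by an inductive application of Hensel's lemma, to a unique root of $f(pt)$ in $\Zp$ congruent to it modulo $p^{N-k}$, which gives the first assertion. For the second, observe that every root of $f(pt)$ in $\Zp$ reduces to a zero of $f_M(pt)$ modulo $p^{N-k}$; if all of those are simple, each has a unique Hensel lift, so the lifts already produced exhaust the set of roots of $f(pt)$ in $\Zp$. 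The main obstacle is precisely the combinatorial claim of the second paragraph: controlling a possible isolated bad index of large $p$-adic valuation sitting just above $N$, which is what forces the two-case definition of $M$ together with the ceiling $N\le p^p-p$.
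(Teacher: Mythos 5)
Your proof is correct and follows essentially the same route as the paper's: both reduce to showing $\ord(b_i)\geq N$ for all $i\geq M$ via Lemma \ref{val}, dispose of $p\nmid i$ immediately, and for $p\mid i$ use the minimality of $ap^e$ together with the monotonicity of $x\mapsto p^x-x$ and the ceiling $N\leq p^p-p$ to rule out any further problematic index. Your reformulation in terms of ``bad indices'' (versus the paper's two cases $bp^r=ap^e$ and $bp^r>ap^e$) and your more explicit handling of the normalization by $p^{-k}$ and the Hensel lifting are only cosmetic differences.
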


\begin{proof}
It suffices to show that for $i\geq M$, $\ord(b_i)\geq N$. Since $M\geq N$, the statement is clear for $p\nmid i$ by Lemma \ref{val}. Now suppose $p|i$ for some $i\geq M$. Hence, $i= bp^r$ where $p\nmid b$ and $r\geq 1$, and $bp^r\geq M \geq N$.
Then by the definition of $ap^e$, we know that 
\[ 
bp^r \geq ap^e \indent \text{and} \indent 0\leq ap^e-N< p.     
\]
We now have two cases to consider:

\text{Case 1:} Assume that $bp^r= ap^e$. It follows that $M=N= bp^r$ which in turn implies that $ap^e-e\geq N$. Then since $(a,e)=(b,r)$, we have that 
\[
\ord(b_i)\geq bp^r-r\geq N.
\]  
\text{Case 2:}  Assume that $bp^r > ap^e$. It follows that $bp^r - ap^e\geq p$. Thus 
\[
\ord(b_i)\geq bp^r-r=(bp^r-ap^e)+(ap^e-r).
\] So if $\ord(b_i)<N$ then $r>(bp^r-ap^e)+(ap^e-N)\geq p$ and hence $p^p-p\leq bp^r-r<N$, contradicting our assumption on $N$.
\end{proof}

\begin{rmk}
In order to apply Proposition \ref{wheretotruncate} we need to meet the condition $m_f <p-2$; assuming that $p>2g$ guarantees  that this is always the case, as a consequence of the Riemann-Roch Theorem (see Remark \ref{remarkRiemannRoch}). Furthermore, in the case when $p>2g$, the hypothesis on $N$ of Proposition \ref{wheretotruncate} is always met, since $m_f+2<p<p^p-p$. 
\end{rmk}

\subsection{Outline of the algorithm}\label{sec:algorithm}
We retain the notation of the beginning of Section \ref{algorithm}. The algorithm will always work if $p\geq 7$ and may or may not work if $p=3$ or $5$ (see Remark \ref{remarkRiemannRoch} and the comments in the main steps of the algorithm below). We now list the input and output of our algorithm followed by its main steps.

\smallskip
\text{Input:}
\begin {itemize}
\item $C$: a hyperelliptic curve of genus $3$ over $\Q$ given by a model $y^2=F(x)$ where $F\in \Q[x]$ is monic of degree $7$,  such that its Jacobian $J$ has rank $1$;
\item $p$: an odd prime of good reduction for $C$ not dividing the leading coefficient of $F$ and $p \geq 7$;
\item $P_0$: a point in $C(\Q)$ such that $[P_0-\infty] \in J(\Q)$ has infinite order;
\item $C(\Q)_{\known}$: a list of all known rational points on $C(\Q)$;
\item the $p$-adic precision $N$ (by Proposition \ref{wheretotruncate}, $N=2p+4$ is sufficiently large);
\item the $t$-adic precision $M$ (if $N=2p+4$ by Proposition \ref{wheretotruncate}, we can set $M= 2p+1$). 
\end{itemize}

\text{Output:}  The set $Z\subseteq C(\Q_p)$ defined in \eqref{eqn:ZeroSet} modulo the action of the hyperelliptic involution. In our code, this set is split into the following:
\begin {itemize}
\item a list of points of $Z$ which can be recognized as points in $C(\Q)\setminus C(\Q)_{\known}$ up to the hyperelliptic involution;
\item a list of points $P\in Z$ such that $[P-\infty]\in J(\Q_p)_{\tors}$, up to the hyperelliptic involution (here, if $P$ is not $2$-torsion and is the localization of a point defined over a quadratic extension of $K/\Q$ then the coordinates in $K$ are given as the corresponding minimal polynomials over $\Q$);
\item a list of all remaining points $P\in Z$ (as above, if $P$ is the localization of a point defined over a quadratic extension of $K/\Q$ then the coordinates in $K$ are given as the corresponding minimal polynomials over $\Q$).
\end{itemize}

\medskip
\text{Main steps of the algorithm:}

\begin{enumerate}
\item\label{alphabeta} \emph{A basis for the annihilator.} 

For each basis differential $\omega_i=(x^i/2y)dx$ ($i=0,1,2$), compute 
\[
\lambda_i=\int_{\infty}^{P_0}\omega_i \indent \text{modulo }p^n,
\]
where $n$ is the given $p$-adic precision. Set $k_{ij}:=\min\{\ord(\lambda_i),\ord(\lambda_j)\}$ and

\[
(\alpha, \beta)
 = 
\begin{cases}  
\left( \omega_0, \, p^{-k_{12}}(\lambda_1\omega_2-\lambda_2\omega_1) \right) & \text{if }\lambda_0=0,\\ 
\left( p^{-k_{01}}(\lambda_0\omega_1-\lambda_1\omega_0), \, p^{-k_{02}}(\lambda_0\omega_2-\lambda_2\omega_0) \right) & \text{else}.
\end{cases}
\]
In either case, $\alpha$ and $\beta$ are reductions modulo $p^{n'}$ of the pullback $\iota^*$ of a basis for the annihilator of $J(\Q)$, where 
\[
n'= 
\begin{cases}  
n-k_{12} & \text{if } \lambda_0=0,\\ 
n-\max\{k_{01},k_{02}\} & \text{else}. 
\end{cases}
\]
By Lemma \ref{nonanomalous}, $n'\leq n-1$ if $p$ is non-anomalous. If $n'\geq 6$ we are guaranteed to be able to carry out all computations in the next steps when $p\geq 7$.

\smallskip
\item\label{ruleout} \emph{Ruling out residue disks.} Observe that we only need to consider residue disks up to the hyperelliptic involution.

Reduce $\alpha$ and $\beta$ modulo $p$. For each $\overline{P}\in \overline{C}(\Fp)$, expand $\alpha$ and $\beta$ in a local coordinate $s$ around $\overline{P}$, calculate the orders of vanishing of $\alpha$ and $\beta$ at $s=0$, and let $m(\overline P)$ denote their minimum. Note that $m(\overline{P})\leq 2$ by Lemmas \ref{nonWeierstrassclasses}, \ref{inftyclass} and \ref{Weierstrassclasses}, and hence it suffices to compute $\alpha(s)$ and $\beta(s)$ up to $O(s^2)$ to find $m(\overline{P})$.

If $m(\overline{P})+1$ equals the number of $\Q$-rational points in $C(\Q)_{\known}$ reducing to $\overline{P}$ modulo $p$ and $m(\overline{P})<p-2$, then by Lemma \ref{UpperBound} the set $C(\Q)_{\known}$ contains all $\Q$-rational points in the residue disk of $\overline{P}$. Otherwise, proceed to the next step. 

\smallskip
\item\label{remainingclasses} \emph{Searching for the remaining disks.} 

If, for a given point $\overline{P}\in\overline{C}(\F_p)$, the number of $\Q$-rational points in $C(\Q)_{\known}$ reducing to $\overline{P}$ modulo $p$ is strictly smaller that $m(\overline{P})+1$, then we need to compute the set of $\Qp$-rational points $P$ reducing to $\overline{P}$ such that $\int_Q^{P}\alpha=\int_Q^{P}\beta = 0$ for a (any) rational point $Q$. For computational convenience we distinguish between two cases:
\begin{enumerate}
\item[(i)] If there exists $P\in C(\Q)_{\known}$ reducing to $\overline{P}$, let $t$ be a uniformizer at $P$. Then expand $\alpha$ and $\beta$ in $t$ and formally integrate to obtain two power series $f(t)$, $g(t)$, which parametrize the integrals of $\alpha$ and $\beta$ between $P$ and any other point in the residue disk.
\item[(ii)] If we do not know any $\Q$-rational point in the residue disk of $\overline{P}$, then we may assume that $\overline{P}\neq \overline{\infty}$ and hence write $\overline{P}=(\overline{x_0},\overline{y_0})$. If $\overline{y_0}=0$, let $P=(x_0,0)$ where $x_0$ is the Hensel lift of $\overline{x_0}$ to a root of $f(x)$. Otherwise, if $\overline{P}$ is not a Weierstrass point, we take $P=(x_0,y_0)$ where $x_0$ is any lift to $\Z_p$ of $\overline{x_0}$ (the Teichm\"uller lift of $\overline{x_0}$ would be a particularly convenient choice for $x_0$) and $y_0$ is obtained from $\overline{y_0}$ using Hensel's Lemma on $y^2=F(x_0)$. Let $\tilde{f}(t)$ and $\tilde{g}(t)$ be the integrals between $P$ and any other point reducing to $\overline{P}$ in terms of a local parameter $t$ at $P$. Then write $f(t)=\tilde{f}(t)+\int_{\infty}^P\alpha$ and $g(t)=\tilde{g}(t)+\int_{\infty}^P\beta.$
\end{enumerate} 
Recall that in (\ref{alphabeta}), we have computed the coefficients of the $\omega_i$ in $\alpha$ and $\beta$ modulo $n'$. To provably compute the set of common zeros to a desired precision, we require that one of $f$ or $g$ have only simple roots, except possibly at $t = 0$ (in practice, this has been the case for every curve that we have considered). To check this requirement, we compute their discriminants, which are correct up to the $p$-adic precision of the coefficients. 

Upon normalizing so that $t = 0$ is not a root of either $f$ or $g$, assume without loss of generality that $f$ has only simple roots. The $t$-adic precision we should compute $f(t)$ to in order to find provably correct approximations of its simple zeros is determined by Proposition \ref{wheretotruncate}. In practice, we truncate $f(t)$ at $O(t^M)$ where $M=n'$, unless the smallest multiple $r$ of $p$ greater than or equal to $n'$ satisfies $r-\ord(r)<n'$, in which case take $M=r+1$. For the $p$-adic precision, the coefficients are computed modulo $p^{n'}$. Then the simple roots are correct up to $O(p^{n'-k})$ where $k$ is the minimal valuation of the coefficients of $f(pt)$ (cf. the discussion preceding Proposition \ref{wheretotruncate}). To compute the roots we use the function \verb+polrootspadic+ implemented in \verb+PARI/GP+. Finally, we take the list of roots which lie in $p\Zp$ and check whether they are also roots of $g$.

If $p=3$ or $p=5$ and the order of vanishing of $f(t)$ or $g(t)$ modulo $p$ is greater than or equal to $p-2$, then we cannot provably find the zeros of $f(pt)$ and $g(pt)$. Currently the algorithm assumes that $p \ge 7$ to avoid these pitfalls.

\smallskip
\item \emph{Identifying the remaining classes.} 

Once we have found the common zeros of $f(pt)$ and $g(pt)$, we recover the corresponding $\Q_p$-rational points that do not come from points in $C(\Q)_{\known}$.
We now have the output set  that we will now break into sublists.

 If we fail to recognize a point $Q$ as $\Q$-rational, we can check whether the integral between $\infty$ and $Q$ of any non-zero differential $\gamma$ not in the span of $\alpha$ and $\beta$ also vanishes: if this is the case, the point $[Q-\infty]\in J(\Q_p)$ is torsion (cf. \cite[Proposition 3.1]{coleman}) and if we know explicitly $J(\Q)_{\text{tors}}$ (which in general is computable) we can verify whether $Q$ is $\Q$-rational or not. Furthermore, by increasing the degree in \texttt{algdep}, we may even try to  identify the number field over which the coordinates of $Q$ are defined\footnote{The hyperelliptic curve $C$ is defined over $\Q$. Thus the fact that $[Q-\infty]\in J(\Q_p)_{\text{tors}}$ forces $Q$ to have coordinates in $\overline{\Q}\cap \Q_p$.}. This may require high $p$-adic precision; however, it was possible for every curve we considered.
 
If the integral of the differential $\gamma$ is non-zero, and we have not recognized $Q$ as a $\Q$-rational point, we can still check whether the point $Q$ is defined over some number field $K$. For instance $[Q-\infty]$ could equal a point in $J(\Q)$ plus some torsion element in $J(K)$, with $[K:\Q]>1$ (see Example \ref{ex:1}).
\end{enumerate}

\subsection{Generalizations of the algorithm}

\subsubsection{What if we do not know $P_0\in C(\Q)$ such that $[P_0-\infty]\not\in J(\Q)_{\tors}$?\\}

The hyperelliptic curve we input in the algorithm is assumed to have rank $1$. Calculation of the rank is attempted by \verb+Magma+ \cite{magma} by working out both an upper bound and a lower bound, the former coming from computation of the $2$-Selmer group and the latter from an explicit search for linearly independent points on the Jacobian. The success of the rank computation relies on the two bounds being equal. In particular, if we suppose that we know provably that the rank of the Jacobian is one, we may as well assume that we know a point $Q \in J(\Q)$ of infinite order and a divisor $E$ on $C$ representing it. Then we may proceed as follows. The first task is to write $Q=[E]$ in the form $[D-d\infty]$, where $D$ is an effective $\Q$-rational divisor. In order to achieve this, we follow step by step the proof of \cite[Corollary 4.14]{stollsnotes}. That is, we compute the dimension of $\L(E+n\infty)$ for $n=0,1,2,\ldots$ (here $\L(E+n\infty)$ denotes the Riemann-Roch space of $E+n\infty$), until we find the smallest $n=m$ for which the dimension is $1$. Then $D-d\infty=E+\div(\phi)$, where $\phi$ generates $\L(E+m\infty)$. By \cite[Lemma 4.17]{stollsnotes}, $D$ is then the unique $\Q$-rational divisor in general position and of degree less than or equal to $g=3$ such that $Q$ can be represented in the form  $[D-d \infty]$.

Let $K$ be the smallest Galois extension of $\Q$ over which the support of $D$ is defined. Furthermore, let $p$ be a prime of good reduction for $C$ that splits completely in $K/\Q$ (there are infinitely many such primes). Then $K$ can be realized as a subfield of $\Q_p$ and hence the support of $D$ can be seen as lying in $C(\Q_p)$. Write $D=\sum_{i=1}^d P_i$ (some $P_i$ possibly being equal). Then we may proceed exactly as before, just replacing $\lambda_i$ in (\ref{alphabeta}) by
\begin{equation*}
\lambda_i=\sum_{i=1}^d\int_{\infty}^{P_i} \omega_i.
\end{equation*}

\subsubsection{Even degree model} 

The algorithm relies heavily on computations of Coleman integrals, for which one needs the hyperelliptic curve considered to have a model of the form $y^2=F(x)$, where $F(x)$ is monic. In particular, if one were to work with an even degree model, the two points at infinity would necessarily be defined over $\Q$ \cite{stollsnotes}. Therefore we could proceed as in the odd degree case with the single point at infinity being replaced by one of these two points. If $F(x)$ is not monic, the issue is that Coleman integration is not implemented in \verb+Sage+, though an implementation is available in \verb+Magma+ \cite{BT, colemangit}. Hence, general even models could be handled by computing the set of local points $Z$ as defined in \eqref{eqn:ZeroSet},  using the sum of the two points at infinity as the divisor $D$. 

\subsubsection{Other ranks and genera}

Our assumptions on $g$ and $r$ are somewhat arbitrary. With minor modifications, our code can be used in more general cases, provided that $0<r<g$. 

\section{Curve Analysis}\label{dataanalysis}

Once we implemented in \verb+Sage+ the algorithm described in the previous section, we ran it over $16,977$ hyperelliptic curves of genus $g=3$ satisfying the following properties:
\begin{enumerate}
\item the curve admits an odd degree model over $\Q$;
\item the Jacobian of the curve has Mordell--Weil rank equal to $1$;
\item there is a $\mathbb{Q}$-rational point $P_0$ such that $[P_0-\infty]$ has infinite order in $J(\Q)$.
\end{enumerate}

In order to obtain those curves, we sorted the $67,879$ genus $3$ hyperelliptic curves from a forthcoming database of genus $3$ curves over $\Q$ \cite{g3hyp}.  Out of these, $19,254$ curves satisfy conditions $(1)$ and $(2)$. 

Running our code for the $16,977$ curves for which we could further find a $P_0$ as in $(3)$, we found $75$ curves where the zero set $Z$ contains something other than the rational points we had already computed and Weierstrass points defined over $\Qp$ for our chosen prime $p$. Note that in all $16,977$ computations, the prime $p$ used was the smallest prime greater than $2g=6$ which divided neither the discriminant nor the leading coefficient of the hyperelliptic polynomial defining the curve.

Let $C$ be one of these $75$ curves, let $W$ denote the set of Weierstrass points in $C(\Qp)\setminus C(\Q)$ and let $P \in Z \setminus (C(\Q) \cup W)$. In all cases, we identified $P$ as a point defined over a quadratic extension $K$ of $\mathbb{Q}$ in which $p$ splits. Even so, these 75 curves split up into 3 distinct cases:
\begin{enumerate}
\item $[P - \infty] \in J(K)_{\text{tors}}$;
\item $[P-\infty]\not\in J(K)_{\text{tors}}$ but $n [P-\infty]\in J(\Q)$ for some positive integer $n$;
\item $\langle J(\Q), [P - \infty]\rangle_{\mathbb{Z}}$ is a rank $2$ subgroup of $J(K)$.
\end{enumerate} 

In cases $(1)$ and $(2)$, it is clear why $P\in Z$. On the other hand, justifying case $(3)$ requires investigating more closely the geometry of the Jacobian of the curve, as is carried out in detail in Example \ref{ex:3}.

\subsection{Examples} For each of the curves below we give a list of known rational points, which are all of the rational points up to a height\footnote{
Our computations show that the $\mathbb{Q}$-rational points of highest absolute logarithmic height (with respect to an odd degree model) on a curve among the $16,977$ hyperelliptic curves that we considered, are
\[
\left(-\frac{49}{18} , -\frac{339563}{11664}\right), \left(-\frac{49}{18},-\frac{1600445}{52488}\right)
\]
on the hyperelliptic curve
\[
C:y^2 + (x^4+x^2+x)y= x^7-x^6-5x^5+5x^3-3x^2-x, 
\]
which has absolute minimal discriminant $5326597$ and whose Jacobian has conductor $5326597$.
} of $10^5$. Following the algorithm outlined in \S \ref{sec:algorithm}, we produce the set $Z$ of local points for the prime $p = 7$ or $p=11$, which in each case returns no new $\Q$-rational points, hence concluding that the set of known rational points $C(\Q)_{\known}$ is all of $C(\Q)$. In each of the examples below, however, $Z$ contains a $\Qp$-point which is not a Weierstrass point and falls into one of the cases outlined above.

\begin{example}\label{ex:1} Consider the hyperelliptic curve 
\[
y^2 + x^3y = x^7+2x^6-2x^5-9x^4-4x^3+8x^2+8x+2
\] 
(given above by a minimal model) which has absolute minimal discriminant  $544256$ and whose Jacobian has conductor 
$544256=2^9 \cdot 1063$. We work with an odd degree model
\[
C: y^2 = 4x^7 + 9x^6 - 8x^5 - 36x^4 - 16x^3 + 32x^2 + 32x + 8, 
\]
which has the following five known rational points:
\[
C(\Q)_{\mathrm{known}}=\{\infty, (-1, -1), (-1, 1), (1, -5), (1, 5)\}.
\]
Running the code on $C$ together with the prime $p=7$ and the point $P_0=(-1,-1)$, we find that 
\[
Z=C(\mathbb{Q})_{\mathrm{known}}\cup W \cup \{(0,\pm 2\sqrt{2})\},
\] 
where the set $W$ of non-$\Q$-rational Weierstrass points has size $3$. Moreover, the points $[(0,\pm 2\sqrt{2})-\infty]\in J(\Q(\sqrt{2}))$ have order $12$. 
\end{example}

\medskip
\begin{example}\label{ex:2} 
Consider the hyperelliptic curve 
\[
y^2 + (x^4 + 1)y = 2x^3 + 2x^2 + x
\] 
(given above by a minimal model) which has absolute minimal discriminant $48519$ and whose Jacobian $J$ has conductor $48519=3^4 \cdot 599$. We work with an odd degree model
\[
C: y^2 = -4x^7 + 24x^6 - 56x^5 + 72x^4 - 56x^3 + 28x^2 - 8x + 1.\]
This curve has the following five known rational points:
\[
C(\Q)_{\mathrm{known}} = \{\infty, (0,-1), (0,1), (1,-1), (1,1)\}.
\]

For this example we will give a more detailed outline of the algorithm. Following Section \ref{sec:CCmethod} we know that there exist functions $\lambda_{\alpha,\infty}$, $\lambda_{\beta,\infty}$ on $C(\Q_p)$, corresponding to differentials $\alpha, \beta \in \OmegaC$. These two functions vanish on the rational points of $C$, i.e., 
\[
C(\Q) \subseteq \ker(\lambda_{\alpha,\infty}) \cap \ker(\lambda_{\beta,\infty}) = Z.
\] 
We would like to know whether this zero set $Z$ contains anything other than the $\Q$-rational points on $C$.

If we take $p=11$, we find that $Z = C(\Q)$. Observe that  $11$ is inert in $K = \Q(\sqrt{-3})$.

If we take $p=7$, which splits in $K = \Q(\sqrt{-3})$, we find that there are four points defined over $K = \Q(\sqrt{-3})$ that appear in $Z$. Up to hyperelliptic involution, we have
\[
\left\{\left((1+\sqrt{-3})/2, \sqrt{-3}\right), \left((1-\sqrt{-3})/2, \sqrt{-3}\right)\right\} \subseteq Z.
\]
There is a good reason for the presence of these points in $Z$: if $P$ denotes any of the above points, then $5[P - \infty] \in J(\Q)$, therefore $5\lambda_{\alpha}(P) = 0$.

We now run through the algorithm to see that for $p=7$ we find that
\[
C(\Q)_{\mathrm{known}} \cup \left\{\left((1+\sqrt{-3})/2, \sqrt{-3}\right), \left((1-\sqrt{-3})/2, \sqrt{-3}\right)\right\}=Z \indent \text{up to hyperelliptic involution}.
\]
First we change variables to obtain an equation for $C$ where the defining polynomial $F(x)$ is monic, so we send $x \mapsto -4x$, $y \mapsto 4^4 y$. The $\F_7$-points of $C$ are 
\[
C(\F_7) =\{\ol{\infty}, \ol{(0,2)}, \ol{(0,5)}, \ol{(1,4)}, \ol{(1,3)}, \ol{(2,4)}, \ol{(2,3)}, \ol{(4,4)}, \ol{(4,3)}, \ol{(5,2)}, \ol{(5,5)}\}.
\]
Of these eleven points, five of them arise as reductions of known $\Q$-rational points, and an order of vanishing calculation shows that these are the only rational points in those residue disks. For the remaining six $\F_7$-points of $C$, the same order of vanishing calculation shows that there is at most one $\Q_p$-point in each residue disk corresponding to these points on which $\lambda_{\alpha,\infty}$ and $\lambda_{\beta,\infty}$ vanish. 

We know that the four quadratic points above reduce to 
\[
\{\ol{(1,4)}, \ol{(1,3)}, \ol{(4,4)},\ol{(4,3)}\}
\]
in some order (note that the quadratic points listed above are on the original curve, and these $\F_7$-points are the images after the change of variables of their reductions).  Hence, our task is now reduced to the analysis of the residue disks of $\ol{(2,4)}$ and $\ol{(2,3)}$, which moreover map to each other under the hyperelliptic involution. To show that $\lambda_{\alpha}, \lambda_{\beta}$ have no zeros in these residue disks, we will explicitly write down the power series and compute their zeros using \verb+PARI/GP+, as outlined in \S\ref{sec:CCmethod}.

As usual, let $\omega_i = x^i/2y$. Then the annihilator of $J(\Q)$ under the integration pairing is spanned by
\begin{align*}
\alpha & = (1 + 2\cdot 7 + 7^2 + 2\cdot 7^3 + 5\cdot 7^4 + O(7^5))\,\omega_0 + (4 + 7^2 + 5\cdot 7^4 + O(7^5))\,\omega_1, \\
\beta & = (6 + 3\cdot 7 + 2\cdot 7^2 + 5\cdot 7^4 + O(7^5))\,\omega_0 + (4 + 7^2 + 5\cdot 7^4 + O(7^6))\,\omega_2.
\end{align*} 

It suffices to consider the residue disk of $\ol{(2,4)}$. In this residue disk, we obtain the two power series
\begin{align*}
f(t) & = 2\cdot7 + 4\cdot7^2 + O(7^3) + (2 + 7 + 2\cdot7^2 + O(7^3))t + (6 + 5\cdot7 + 4\cdot7^2 + O(7^3))t^2 + \cdots, \\
g(t) & = 7 + 6\cdot7^2 + O(7^3) + (1 + 5\cdot7 + 2\cdot7^2 + O(7^3))t + (3 + 4\cdot7 + 7^2 + O(7^3))t^2 + \cdots.
\end{align*}
Each of these have one zero in $p\Zp$, but not the same zero. The two zeros are $6\cdot7 + 5\cdot7^2 + O(7^3)$ and $6\cdot7 + 2\cdot7^2 + O(7^3)$, respectively.
\end{example}

\begin{example}\label{ex:3} 
Consider the hyperelliptic curve 
\[
y^2 + (x^3+x)y = x^7-4x^6+8x^5-10x^4+8x^3-4x^2+x,
\] 
(given above by a minimal model) which has absolute minimal discriminant $1573040$ and whose Jacobian $J$ has conductor $786520 = 2^3\cdot 5 \cdot 7 \cdot 53^2$. We work with the odd degree model
\[
C:y^2=4x^7 - 15x^6 + 32x^5 - 38x^4 + 32x^3 - 15x^2 + 4x, 
\]
on which we know the following rational points:
\[
C(\mathbb{Q})_{\mathrm{known}} =\{\infty, (0, 0), (1, -2), (1, 2)\}.
\]
The point $R=[(1,-2)-\infty]$ has infinite order in $J(\mathbb{Q})$ and can thus be used to initiate the algorithm of \S\ref{sec:algorithm} with $p=11$, which is the smallest prime greater than $6$ of good reduction for $C$. We find that
\[
Z = C(\mathbb{Q})_{\mathrm{known}}\cup W\cup \{(-1,\pm 2\sqrt{-35})\},
\]
where $W$ has size $2$. In particular, we have $C(\mathbb{Q})=C(\mathbb{Q})_{\mathrm{known}}$. 

Perhaps more interestingly, we now explain\footnote{We are grateful to Andrew Sutherland for kindly computing real endomorphism algebras (using the techniques of \cite{g2data, HS}) for a number of curves that produced Chabauty--Coleman output similar to this example, which greatly assisted in understanding the structure of their Jacobians.  We would also like to thank Bjorn Poonen for a very helpful discussion about this phenomenon.} why $\{(-1,\pm 2\sqrt{-35})\}\subseteq Z$. Let $K=\mathbb{Q}(\sqrt{-35})$, $\mathrm{Gal}(K/\mathbb{Q})=\braket{\tau}$ and fix an embedding $K\to \mathbb{Q}_p$. The point 
\[
Q=[(-1,2\sqrt{-35})-\infty]\in J(K)
\] 
is of infinite order, as there exists a non-zero differential in $H^0(J_{\mathbb{Q}_p},\Omega^1)$ whose integral does not vanish on $Q$. Suppose that $nQ\in J(\mathbb{Q})$ for some integer $n\neq 0$. Then $nQ=(nQ)^{\tau}=nQ^{\tau}$ and hence $2Q = Q-Q^{\tau}\in J(K)_{\text{tors}}$, a contradiction. It follows that $J(\mathbb{Q})$ and $Q$ generate a subgroup of rank $2$ in $J(K)$. A computation in \verb+Magma+ shows that the rank of $J(K)$ itself is equal to $2$. Therefore, the image of $Z\setminus W$ in $J(K)$ generates a subgroup of finite index. Showing that $(-1,\pm 2\sqrt{-35})\in Z$ is then equivalent to proving that the dimension of the $p$-adic closure of $J(K)$ in $J(\mathbb{Q}_p)$ is $1$, i.e. that the $\mathbb{Z}$-linear independence of $Q$ and $J(\mathbb{Q})$ is not preserved under base-change to $\mathbb{Q}_p$.

To explain this phenomenon, we compute the automorphism group of $C$ using \verb+Magma+. We have that $\mathrm{Aut}(C)\cong C_2\times C_2$, where the first copy of $C_2$ is generated by the hyperelliptic involution $i:C\to C$ and the second one is generated by $\phi:C\to C$, $\varphi(x,y,z)=(z,y,x)$. The quotient $C/\braket{\phi}$ is the elliptic curve
\[
E:y^2 + xy + y = x^3 - x^2,
\]
whereas the quotient $C/\braket{\varphi\circ i}$ is the genus $2$ hyperelliptic curve
\[
H:y^2 + (x^2 + 1)y = x^5 + x^4 - 4x^3 + 3x^2 - x - 1.
\]
It follows that $J$ decomposes, over $\mathbb{Q}$, as a product of $E$ and the Jacobian of $H$, which is an abelian surface $A$ with no extra endomorphisms \cite[Theorem 4]{Paulhus} . 

Since $\text{rank}(E(\mathbb{Q}))=\text{rank}(J(\Q))=1$ and the $p$-adic closure of $E(L)$ in $J(\mathbb{Q}_p)$ can have dimension at most one for any number field $L$ where $p$ splits completely and any embedding $L\to\Q_p$, in order to determine which points of $C(\overline{\mathbb{Q}})$ can appear in $Z$, we then need to search for points that map to torsion points in $A$, under the quotient map $C\to C/\braket{\varphi\circ i}\to A$. 

An explicit computation using Coleman integrals on $H$ shows that $H(\Q_p)\cap A(\Q_p)_{\mathrm{tors}}\subseteq A(\Q_p)[2]$, where, as usual, the embedding of $H$ into $A$ is via the base-point $\infty$. Let $T=(x,y,z)\in C(\Q_p)$. Then $\varphi\circ i (T)=(z,-y,x)$: thus, $T$ maps into $A(\Q_p)_{\mathrm{tors}}$ iff either $T=(0,0)$, $T=\infty$ or $(z,-y,x)=(x,-y,z)$, i.e. $x^2=1$. This shows both why $\{(-1,\pm 2\sqrt{-35})\}\subset Z$ and why no other non-torsion point in $C(\overline{\Q})$ can occur in $Z$ besides $(1,\pm 2)$ and $(-1,\pm 2\sqrt{-35})$.
\end{example}

\bibliographystyle{amsalpha}
\bibliography{biblioarticle} 

\end{document}